\documentclass{gOPT2e}

\usepackage{amsmath,amssymb,amsthm}
\usepackage[colorlinks,urlcolor=red]{hyperref}

\newcommand {\R} {\mathbb R}
\newcommand {\N} {\mathbb N}
\newcommand{\bx}{\bar x}
\newcommand{\by}{\bar y}
\newcommand{\bc}{\bar c}
\newcommand{\la}{\lambda}
\newcommand{\de}{\delta}
\newcommand {\B} {\mathbb B}
\newcommand {\cl} {{\rm cl}\,}
\newcommand{\set}[1]{\left\{#1\right\}}
\newcommand {\dom} {{\rm dom}\,}
\newcommand {\Int} {{\rm int}\,}
\newcommand{\ds}{\displaystyle}

\def\lsc{lower semicontinuous}

\newtheorem{thm}{Theorem}

\newtheorem{cor}[thm]{Corollary}

\newtheorem{prop}[thm]{Proposition}
\theoremstyle{definition}
\newtheorem{defn}[thm]{Definition}
\theoremstyle{remark}
\newtheorem{rem}[thm]{Remark}

\begin{document}
\title{{\itshape Borwein--Preiss Vector Variational Principle}}
\author{Alexander Y. Kruger$^*$
\thanks{$^*$Corresponding author. Email: a.kruger@federation.edu.au}
\quad
Somyot Plubtieng$^\dag$
\quad
Thidaporn Seangwattana$^\dag$
\\\vspace{6pt}
$^*${\em{Centre for Informatics and Applied Optimisation,\\
Faculty of Science and Technology,
Federation University Australia,\\
POB 663, Ballarat, Vic, 3350, Australia}}
\\\vspace{6pt}
$^\dag${\em{Department of Mathematics, Faculty of Science, Naresuan University,\\
Phitsanulok 65000, Thailand}}
\\\received{Dedicated to Professor Franco Giannessi in celebration of his 80th birthday\\
and Professor Diethard Pallaschke in celebration of his 75th birthday}}
\date{}
\maketitle

\begin{abstract}
This article extends to the vector setting the results of our previous work Kruger et al. 
(2015) which refined and slightly strengthened the metric space version of the
Borwein--Preiss variational principle due to Li and Shi, \emph{J. Math. Anal. Appl.} 246(1), 308--319 (2000).
We introduce and characterize two seemingly new natural concepts of $\epsilon$-minimality, one of them dependant on the chosen element in the ordering cone and the fixed ``gauge-type'' function.
\end{abstract}
\begin{keywords}
Borwein-Preiss variational principle, smooth variational principle, gauge-type function, perturbation
\end{keywords}
\begin{classcode}
49J52; 49J53; 58C06
\end{classcode}
\section{Introduction}

Given an ``almost minimal'' point of a function, a variational principle guaranties the existence of another point and a suitably perturbed function for which this point is (strictly) minimal and provides estimates of the (generalized) distance between the points and also the size of the perturbation.
They are among the main tools in optimization theory and various branches of analysis.

The principles differ mainly in terms of the class of perturbations they allow.
The perturbation guaranteed by the conventional Ekeland variational principle \cite{Eke74} is nonsmooth even if the underlying space is a smooth Banach space and the function is everywhere Fr\'echet differentiable.
In contrast, the \emph{Borwein--Preiss variational principle} \cite{BorPre87} (originally formulated in the Banach space setting) works with a special class of perturbations determined by the norm; when the space is \emph{smooth} (i.e., the norm is Fr\'echet differentiable away from the origin), the perturbations are smooth too.
Because of that, the Borwein--Preiss variational principle is often referred to as the \emph{smooth variational principle}.
It has found numerous applications and paved the way for a number of smooth principles \cite{Phe89,DevGodZiz93.2,FabHajVan96,DevIva97, IofTik97,FabMor98, LoeWan01,Pen01,Geo05}.
Among the known extensions of the Borwein--Preiss variational principle, we mention the work by Li and Shi \cite[Theorem 1]{LiShi00}, where the principle was extended to metric spaces (of course at the expense of losing the smoothness), covering also the conventional Ekeland variational principle.

Since mid-1980s (cf. Loridan \cite{Lor84}, N\'emeth \cite{Nem86}, Khanh \cite{Kha89.2}), a considerable amount of research has been devoted to extending variational principles (mainly the conventional one due to Ekeland \cite{Eke74}) to vector-valued functions and, more recently, to set-valued mappings.
We refer the reader to books \cite{GopRiaTamZal03,CheHuaYan05,KhaTamZal15} and several recent articles \cite{Ha05, BaoMor07,
LiuNg11,TamZal11, KhaQuy13} for a good account of the results and various approaches.

Along with various scalarization techniques, generalized vector metrics have been used \cite{Kha89.2,CheHuaYan05,GutJimNov08}.
Some authors have considered
directional
and more general set perturbations \cite{Tam92,GopHenTam95,FinQuaTro03, GopRiaTamZal03,BedPrz07,BedZag09,KhaQuy10, BedZag10,LiuNg11,SitPlu12,KhaQuy13,KhaTamZal15}.
Using the latter approach,
Bednarczuk and Zagrodny \cite{BedZag10} obtained recently an extension of the Borwein--Preiss variational principle to vector-valued functions.

This article extends to the vector setting the results of our previous work \cite{KruPluSea1} which refined and slightly strengthened the metric space version of the Borwein--Preiss variational principle due to Li and Shi \cite{LiShi00}.

The structure of the article is as follows.
In the next, preliminary section, we discuss and compare various concepts of (approximate) minimality, boundedness, lower semicontinuity arising in the vector settings and relevant for the model studied in the current article.
In particular, we introduce in Definition~\ref{D11} two seemingly new natural concepts of $\epsilon$-minimality, one of them dependant on the chosen element in the ordering cone and the fixed ``gauge-type'' function (the term introduced by Li and Shi \cite{LiShi00}; cf. Definition~\ref{D2}) on the source space.
This seems to be the weakest $\epsilon$-minimality property ensuring the conclusions of the variational principle proved in Section~\ref{S3}.
A comparison of these concepts with other approximate minimality and lower boundedness properties is provided.

Section~\ref{S3} is dedicated to the Borwein--Preiss vector variational principle.
It is established in Theorem~\ref{main}.
In its statement and proof, we exploit and sharpen an idea of Li and Shi \cite{LiShi00} which allows elements of a sequence $\{\delta_{i}\}_{i=0}^\infty\subset\R_+$ involved in the statement of the theorem to be either all strictly positive or equal zero starting from some number.
This technique allowed Li and Shi to obtain an extension of both Borwein--Preiss and Ekeland variational principles.
In the final Section~\ref{S4}, we discuss the main result proved in Section~\ref{S3} and formulate a series of remarks and several corollaries.

Our basic notation is standard, cf. \cite{RocWet98,BorZhu05,DonRoc14}.
$X$ and $Y$ stand for either metric or normed spaces.
A metric or a norm in any space are denoted by $d(\cdot,\cdot)$ or $\|\cdot\|$, respectively.
$\N$ denotes the set of all positive integers.


\section{Level sets, minimality, boundedness, lower semicontinuity}

In this section, $f$ is a function from a metric space $X$ to a normed vector space $Y$ and $C$ is a pointed convex cone in $Y$, i.e., $C+C\subset C$, $\alpha C\subset C$ for $\alpha\in(0,\infty)$, and $C\cap(-C)=\{0\}$.
This cone is going to play the role of an ordering cone in $Y$.

Given a point $\by\in Y$, we can consider lower and upper \emph{$\by$-sublevel} sets of $f$ with respect to $C$:
\begin{gather*}
S_{\by}^\le(f):=\set{x\in X\mid \by-f(x)\in C},
\quad
S_{\by}^\ge(f):=\set{x\in X\mid f(x)-\by\in C}.
\end{gather*}
Obviously
\begin{gather*}
S_{\by}^\le(f)
\cap
S_{\by}^\ge(f)=\set{x\in X\mid f(x)=\by}.
\end{gather*}

If $\by=f(\bx)$ for some $\bx\in X$, we will write $S^\le(f,\bx)$ and $S^\ge(f,\bx)$ instead of $S_{f(\bx)}^\le(f)$ and $S_{f(\bx)}^\ge(f)$, respectively.
It is easy to check that a point $\bx$ is a (\emph{Pareto}) \emph{minimal} (\emph{efficient}) point of $f$ if and only if $f(x)=f(\bx)$ for all $x\in S^\le(f,\bx)$, i.e.,
$S^\le(f,\bx)\subset S^\ge(f,\bx)$.
Similarly, $\bx$ is a \emph{maximal point} of $f$ if and only if
$S^\ge(f,\bx)\subset S^\le(f,\bx)$.

We will also use the notation
\begin{gather*}
S_{\epsilon}^\ge(f,\bx):=\set{x\in X\mid
f(x)-f(\bx)\in C+\epsilon\B},
\end{gather*}
where $\epsilon\ge0$.
Obviously, $S_{0}^\ge(f,\bx)=S^\ge(f,\bx)$.

Following \cite[Theorem 1]{LiShi00} and \cite[Definition 2.5.1]{BorZhu05}, we are going to employ in the rest of the article the following concept of \emph{gauge-type} function.

\begin{defn}\label{D2}
Let $(X,d)$ be a metric space. We say that a continuous function $\rho : X\times X\rightarrow [0,\infty]$ is a gauge-type function if
\begin{enumerate}
\item
$\rho(x,x) = 0$ for all $x \in X,$
\item
for any $\epsilon > 0$ there exists $\delta > 0$ such that, for all $y,z \in X$, inequality $\rho(y,z) \leq \delta$ implies $d(y,z) < \epsilon.$
\end{enumerate}
\end{defn}

Given a gauge-type function $\rho$ on $X$ and a point $\bar c\in C$, we will consider the set (cf. the \emph{lower sector of $\bx$} \cite[p.~956]{KhaQuy13})
\begin{gather*}
S_{\rho,\bar c}^\le(f,\bx):=\set{x\in X\mid f(\bx)-f(x)-\rho(x,\bx)\bar c\in C}.
\end{gather*}
Obviously, $S_{\rho,\bar c}^\le(f,\bx)\subset S^\le(f,\bx)$.

\begin{defn}\label{D11}
Let $\epsilon\ge 0$.
We say that $\bx\in X$ is
\begin{itemize}
\item
an $\epsilon$-minimal point of $f$ if $S^\le(f,\bx)\subset S_{\epsilon}^\ge(f,\bx)$;
\item
an $\epsilon$-minimal point of $f$ with respect to $\rho$ and $\bar c$ if $S_{\rho,\bar c}^\le(f,\bx)\subset S_{\epsilon}^\ge(f,\bx)$.
\end{itemize}
\end{defn}
Any $\epsilon$-minimal point of $f$ is obviously an $\epsilon$-minimal point of $f$ with respect to any gauge-type function $\rho$ and any $\bar c\in C$.

\begin{prop}\label{P12}
Let $\epsilon\ge 0$.
$\bx\in X$ is an $\epsilon$-minimal point of $f$ if and only if
\begin{gather}\label{sil}
f(X)\cap(f(\bx)-C)\subset f(\bx)+C+\epsilon\B.
\end{gather}
\end{prop}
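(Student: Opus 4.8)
The plan is to unwind both sides of the claimed equivalence directly from the definitions, since Proposition~\ref{P12} is essentially a reformulation of the inclusion $S^\le(f,\bx)\subset S_{\epsilon}^\ge(f,\bx)$ in terms of images rather than preimages. Recall $S^\le(f,\bx)=\set{x\in X\mid f(\bx)-f(x)\in C}$ and $S_{\epsilon}^\ge(f,\bx)=\set{x\in X\mid f(x)-f(\bx)\in C+\epsilon\B}$. Thus $\bx$ is $\epsilon$-minimal iff for every $x\in X$, the implication $f(\bx)-f(x)\in C \implies f(x)-f(\bx)\in C+\epsilon\B$ holds.

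First I would prove the forward direction. Assume $\bx$ is $\epsilon$-minimal, and take any $y\in f(X)\cap(f(\bx)-C)$. Then $y=f(x)$ for some $x\in X$, and $f(\bx)-y=f(\bx)-f(x)\in C$, so $x\in S^\le(f,\bx)$. By $\epsilon$-minimality, $x\in S_{\epsilon}^\ge(f,\bx)$, i.e.\ $f(x)-f(\bx)\in C+\epsilon\B$, which rearranges to $y=f(x)\in f(\bx)+C+\epsilon\B$. Hence \eqref{sil} holds. Conversely, assume \eqref{sil} and take $x\in S^\le(f,\bx)$. Then $y:=f(x)\in f(X)$ and $f(\bx)-y\in C$, so $y\in f(X)\cap(f(\bx)-C)$; by \eqref{sil}, $y\in f(\bx)+C+\epsilon\B$, i.e.\ $f(x)-f(\bx)\in C+\epsilon\B$, so $x\in S_{\epsilon}^\ge(f,\bx)$. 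This shows $S^\le(f,\bx)\subset S_{\epsilon}^\ge(f,\bx)$, i.e.\ $\bx$ is $\epsilon$-minimal.

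There is no real obstacle here: the only thing to be slightly careful about is the direction of the set operations (subtracting $f(\bx)$ from both sides of the membership, and recognizing that $f(\bx)-y\in C$ is the same condition as $y\in f(\bx)-C$), but these are immediate since $Y$ is a vector space. The proof is a short two-way chase; I would present it compactly in the two paragraphs above, perhaps merging the two implications into a single "iff" chain since each step is reversible.
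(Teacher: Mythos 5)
Your proof is correct and follows essentially the same definition-unwinding argument as the paper's own proof of Proposition~\ref{P12}, with the two directions handled in the same way.
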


\begin{proof}
Let
$\bx\in X$ be an $\epsilon$-minimal point of $f$ and
$y\in f(X)\cap(f(\bx)-C)$, i.e., $y=f(x)$ for some $x\in X$ and $f(x)\in f(\bx)-C$ or, equivalently,
$f(\bx)-f(x)\in C$, i.e., $x\in S^\le(f,\bx)$.
By Definition~\ref{D11}, $x\in S_{\epsilon}^\ge(f,\bx)$, i.e., $f(x)-f(\bx)\in C+\epsilon\B$ and $y=f(x)\in f(\bx)+C+\epsilon\B$.

Conversely, let \eqref{sil} holds true and $x\in S^\le(f,\bx)$, i.e., $f(\bx)-f(x)\in C$ or, equivalently, $f(x)\in f(\bx)-C$.
By \eqref{sil}, $f(x)\in f(\bx)+C+\epsilon\B$.
Thus, $f(x)-f(\bx)\in C+\epsilon\B$, i.e., $x\in S_{\epsilon}^\ge(f,\bx)$.
\end{proof}

Recall that function $f$ is called \emph{level $C$-bounded} \cite{BedZag10} at $\bx\in X$ if
\begin{gather}\label{ag}
f(X) \cap (f(\bx)-C) \subset M + C
\end{gather}
for some bounded subset $M \subset Y$.
Obviously, $f$ is level $C$-bounded at $\bx$ if it is \emph{$C$-(lower) bounded} \cite{GopHenTam95}: $f(X)\subset M + C$ for some bounded subset $M \subset Y$; cf. \cite{BorZhu93,BedZag09,BedKru12.2}.

\begin{prop}\label{P12+}
$f$ is level $C$-bounded at $\bx$ if and only if $\bx\in X$ is an $\epsilon$-mi\-nimal point of $f$ for some $\epsilon\ge 0$.
\end{prop}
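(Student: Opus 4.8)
The plan is to combine Proposition~\ref{P12} with the definition of level $C$-boundedness and exploit the fact that a bounded set is contained in a ball centred at any fixed point. By Proposition~\ref{P12}, the point $\bx$ being $\epsilon$-minimal for some $\epsilon\ge0$ is equivalent to the existence of $\epsilon\ge0$ with $f(X)\cap(f(\bx)-C)\subset f(\bx)+C+\epsilon\B$. So it suffices to show that this inclusion holds for some $\epsilon\ge0$ if and only if $f(X)\cap(f(\bx)-C)\subset M+C$ for some bounded $M\subset Y$.

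First I would prove the ``if'' direction. Suppose $f$ is level $C$-bounded at $\bx$, so that $f(X)\cap(f(\bx)-C)\subset M+C$ for some bounded $M$. Since $M$ is bounded, there is $\epsilon\ge0$ with $M\subset f(\bx)+\epsilon\B$ (translate the ball that contains $M$ to be centred at $f(\bx)$; boundedness is translation-invariant). Then $M+C\subset f(\bx)+\epsilon\B+C=f(\bx)+C+\epsilon\B$, so $f(X)\cap(f(\bx)-C)\subset f(\bx)+C+\epsilon\B$, which by Proposition~\ref{P12} says $\bx$ is $\epsilon$-minimal.

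For the ``only if'' direction, suppose $\bx$ is an $\epsilon$-minimal point of $f$ for some $\epsilon\ge0$. By Proposition~\ref{P12}, $f(X)\cap(f(\bx)-C)\subset f(\bx)+C+\epsilon\B$. Set $M:=f(\bx)+\epsilon\B$; this is a bounded subset of $Y$, and the inclusion reads $f(X)\cap(f(\bx)-C)\subset M+C$, which is precisely \eqref{ag}. Hence $f$ is level $C$-bounded at $\bx$.

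I do not anticipate any real obstacle here: the statement is essentially an unfolding of the two definitions once Proposition~\ref{P12} is invoked, and the only mildly non-trivial point is the elementary observation that any bounded subset of a normed space is contained in a closed ball centred at any prescribed point (in particular at $f(\bx)$), so one may always absorb the translation into $C$ using $\epsilon\B+C=C+\epsilon\B$. If anything, care is needed only in the degenerate reading $\epsilon=0$, where $M=\{f(\bx)\}$ and the equivalence reduces to the trivial tautology $f(X)\cap(f(\bx)-C)\subset f(\bx)+C\iff f(X)\cap(f(\bx)-C)\subset\{f(\bx)\}+C$; this causes no difficulty.
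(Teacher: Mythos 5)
Your proof is correct and follows essentially the same route as the paper's: both directions reduce the statement to Proposition~\ref{P12} by taking $M:=f(\bx)+\epsilon\B$ in one direction and enclosing the bounded set $M$ in a ball $f(\bx)+\epsilon\B$ in the other. The extra remarks about translation-invariance of boundedness and the case $\epsilon=0$ are fine but not needed.
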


\begin{proof}
If $\bx$ is an $\epsilon$-minimal point of $f$ for some $\epsilon\ge 0$, then, by Proposition~\ref{P12}, condition \eqref{ag} is satisfied with the bounded set $M:=f(\bx)+\epsilon\B$.

Conversely, if condition \eqref{ag} is satisfied with some bounded set $M$, then there exists an $\epsilon\ge0$ such that $M\subset f(\bx)+\epsilon\B$, and \eqref{ag} implies \eqref{sil}, which, thanks to Proposition~\ref{P12}, means that $\bx$ is an $\epsilon$-minimal point of $f$.
\end{proof}

\begin{rem}
Thanks to Proposition~\ref{P12+}, the nonnegative number $\epsilon$ in Definition~\ref{D11} (and condition \eqref{sil}) provides a quantitative characterization of the level $C$-bo\-undedness of $f$ at $\bx$.
\end{rem}

Several other conditions can be used for defining/characterizing $\epsilon$-mini\-mality.
Here are some examples.
\begin{gather}\label{silw}
f(X)\subset f(\bx)+C+\epsilon\B,
\end{gather}
$\epsilon$-mini\-mality in the sense of Tanaka (cf. \cite{GutJimNov06,Bon09}):
\begin{gather}\label{silw2}
f(X)\cap(f(\bx)-C)\subset f(\bx)+\epsilon\B,
\end{gather}
$\epsilon$-mini\-mality in the direction $\bar c\in C\setminus\{0\}$  \cite{Lor84,Kut79} (cf. \cite{Tam92,BedPrz07,
Bon09,GutJimNov12,GopRiaTamZal03,CheHuaYan05}):
\begin{gather*}
f(X)\cap(f(\bx)-C\setminus\{0\}-\epsilon \bar c)=\emptyset.
\end{gather*}

\begin{prop}\label{P13}
Let $\epsilon\ge 0$ and $\bx\in X$.
\begin{enumerate}
\item
If either \eqref{silw} or \eqref{silw2} holds true, then
$\bx$ is an $\epsilon$-minimal point of $f$.
\item
If $\bx$ is an $\epsilon$-minimal point of $f$, then, for any $\bar c\in\Int C$,
\begin{gather*}
f(X)\cap(f(\bx)-C\setminus\{0\}-\xi \bar c)=\emptyset
\end{gather*}
where
$\xi=\epsilon/d(\bar c,X\setminus C)$.
\end{enumerate}
\end{prop}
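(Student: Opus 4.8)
The plan is to read part~(i) directly off Proposition~\ref{P12} and to prove part~(ii) by contradiction, using only Definition~\ref{D11} and the elementary arithmetic of the cone $C$ (closedness under addition and positive scaling, and pointedness). Part~(i) is a one-line observation: if \eqref{silw} holds then $f(X)\cap(f(\bx)-C)\subset f(X)\subset f(\bx)+C+\epsilon\B$, and if \eqref{silw2} holds then, since $0\in C$, $f(X)\cap(f(\bx)-C)\subset f(\bx)+\epsilon\B\subset f(\bx)+C+\epsilon\B$; in either case condition \eqref{sil} holds, so Proposition~\ref{P12} yields that $\bx$ is an $\epsilon$-minimal point of $f$.

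For part~(ii) I would argue by contradiction. Assume $f(x)\in f(\bx)-C\setminus\{0\}-\xi\bar c$ for some $x\in X$, say $f(\bx)-f(x)=c+\xi\bar c$ with $c\in C\setminus\{0\}$. Since $\bar c\in C$ and $\xi\ge0$ give $\xi\bar c\in C$, we get $f(\bx)-f(x)\in C$, i.e.\ $x\in S^\le(f,\bx)$; hence, by the assumed $\epsilon$-minimality, $x\in S_{\epsilon}^\ge(f,\bx)$, so $f(x)-f(\bx)=w+e$ with $w\in C$ and $e\in\epsilon\B$. Equating the two expressions for $f(x)-f(\bx)$ gives $e=-(c+w)-\xi\bar c$. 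I would then set $v:=c+w\in C$ and note that $v\neq0$, since $v=0$ would force $c=-w\in C\cap(-C)=\{0\}$, contradicting $c\neq0$. The case $\epsilon=0$ is immediate: then $\xi=0$ and $e=0$, whence $v=0$, a contradiction. So one may assume $\epsilon>0$, and then $\xi=\epsilon/d(\bar c,Y\setminus C)>0$.

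It then remains to contradict the relation $v+\xi\bar c=-e$ with $v\in C\setminus\{0\}$ and $\|e\|\le\epsilon$. Dividing by $\xi>0$ gives $-v/\xi=\bar c+e/\xi$ with $\|e/\xi\|=\|e\|/\xi\le\epsilon/\xi=d(\bar c,Y\setminus C)$. Since $\bar c$ is an interior point of $C$, the ball about $\bar c$ of radius $d(\bar c,Y\setminus C)$ is contained in $C$, so $\bar c+e/\xi\in C$; hence $-v/\xi\in C$ and, on scaling by $\xi>0$, $-v\in C$. Together with $v\in C$ this forces $v\in C\cap(-C)=\{0\}$, contradicting $v\neq0$, and part~(ii) follows. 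The step I expect to be the main obstacle is exactly this last inclusion $\bar c+e/\xi\in C$, which is also the only place where the value $\xi=\epsilon/d(\bar c,Y\setminus C)$ enters: it rests on the fact that an interior point of $C$ admits a ball of radius $d(\bar c,Y\setminus C)$ lying in $C$, and here the convention chosen for the unit ball $\B$ (open versus closed) must be matched with the one underlying Definition~\ref{D11}; everything else is routine cone bookkeeping, with the degenerate case $\epsilon=0$ disposed of at once.
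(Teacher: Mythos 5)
Your proposal is correct and follows essentially the same route as the paper: part (i) by comparing \eqref{silw} and \eqref{silw2} with \eqref{sil} via Proposition~\ref{P12}, and part (ii) by contradiction using the pointedness of $C$ together with the inclusion $\bar c+d(\bar c,Y\setminus C)\B\subset C$; your version merely writes the set-intersection argument elementwise and, as a small bonus, disposes of the degenerate case $\epsilon=0$ explicitly. No gaps.
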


\begin{proof}
(i) follows from comparing conditions \eqref{silw} and \eqref{silw2} with \eqref{sil}, thanks to Proposition~\ref{P12}.

(ii) Suppose that $f(X)\cap(f(\bx)-C\setminus\{0\}-\xi \bar c)\ne\emptyset$ for some $\bar c\in\Int C$ and $\xi=\epsilon/d(\bar c,X\setminus C)$, i.e., there is an $x\in X$ such that $f(x)\in f(\bx)-C\setminus\{0\}-\xi \bar c$ and consequently $f(x)\in f(\bx)-C$.
Then, by \eqref{sil}, $f(x)\in f(\bx)+C+\epsilon\B$.
Hence, $(f(\bx)-C\setminus\{0\}-\xi \bar c)\cap(f(\bx)+C+\epsilon\B)\ne\emptyset$.
It follows that
$(C+\bar c+(\epsilon/\xi)\B)\cap(-C\setminus\{0\})\ne\emptyset$.
By the assumption, $\bar c+(\epsilon/\xi)\B\subset C$, and consequently, $C+\bar c+(\epsilon/\xi)\B\subset C$.
Hence, $C\cap(-C\setminus\{0\})\ne\emptyset$, which is impossible
since $C$ is a pointed cone, a contradiction.
\end{proof}

\begin{defn}\label{D14}
Let $\bar c\in C$.
Function $f$ is called $C$-\lsc\ with respect to $\bar c$ at $x\in X$ if, for any $\{x_k\}\subset X$, $\{\epsilon_k\}\subset\R$ and $y\in Y$ such that $y-f(x_k)-\epsilon_k\bar c\in C$ ($k=1,2,\ldots$), $x_k\to x$ and $\epsilon_k\to0$ as $k\to\infty$, it holds $y-f(x)\in C$.

We say that $f$ is $C$-\lsc\ with respect to $\bar c$ on a subset $U\subset X$ if it is $C$-\lsc\ with respect to $\bar c$ at all $x\in U$.
In the case $U=X$, we say that $f$ is \lsc\ with respect to $\bar c$.
\end{defn}

Note that the defined above concept of $C$-lower semicontinuity with respect to $\bar c$ differs from that of $(\bar c,C)$-lower semicontinuity from \cite[p. 186]{CheHuaYan05}.

\begin{prop}\label{P15}
Let $\bar c\in C$.
$f$ is $C$-\lsc\ with respect to $\bar c$ if and only if, for any $y\in Y$ and any continuous function $g:X\to\R$, the set
\begin{align}\label{P15-1}
S:=\left\{x \in X\mid y- f(x)- g(x)\bar c\in C\right\}
\end{align}
is closed.
\end{prop}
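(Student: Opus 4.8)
The plan is to prove the two implications separately, exploiting the defining condition of $C$-lower semicontinuity with respect to $\bar c$ (Definition~\ref{D14}) in the ``only if'' direction and a judicious choice of the continuous function $g$ in the ``if'' direction.

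First I would prove that $C$-\lsc\ with respect to $\bar c$ implies closedness of $S$ in \eqref{P15-1}. Take $y\in Y$ and a continuous $g:X\to\R$, and let $\{x_k\}\subset S$ with $x_k\to x$. Then $y-f(x_k)-g(x_k)\bar c\in C$ for every $k$. Setting $\epsilon_k:=g(x_k)$, continuity of $g$ gives $\epsilon_k\to g(x)=:t$, but Definition~\ref{D14} is stated with $\epsilon_k\to 0$, so I first replace $y$ by $y':=y-t\bar c$ and $\epsilon_k$ by $\epsilon_k':=\epsilon_k-t$, so that $y'-f(x_k)-\epsilon_k'\bar c\in C$ and $\epsilon_k'\to 0$. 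Applying $C$-\lsc\ with respect to $\bar c$ at $x$ yields $y'-f(x)\in C$, i.e. $y-f(x)-t\bar c=y-f(x)-g(x)\bar c\in C$, so $x\in S$ and $S$ is closed.

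Conversely, assume that $S$ in \eqref{P15-1} is closed for every $y\in Y$ and every continuous $g:X\to\R$, and verify Definition~\ref{D14}. Let $\{x_k\}\subset X$, $\{\epsilon_k\}\subset\R$, $y\in Y$ satisfy $y-f(x_k)-\epsilon_k\bar c\in C$, $x_k\to x$, $\epsilon_k\to0$. The natural idea is to build a single continuous $g:X\to\R$ with $g(x_k)=\epsilon_k$ for all $k$ and $g(x)=0$; then each $x_k$ lies in the corresponding set $S$, closedness forces $x\in S$, i.e. $y-f(x)-g(x)\bar c=y-f(x)\in C$, which is the desired conclusion. The only subtlety is the case when infinitely many $x_k$ coincide with $x$ itself or with each other with conflicting $\epsilon$-values; this is handled by passing to a subsequence (which does not affect the hypotheses or conclusion) and noting that if $x_k=x$ for infinitely many $k$ then already $y-f(x)-\epsilon_k\bar c\in C$ with $\epsilon_k\to0$, and closedness of $C$ is not even needed — rather one applies closedness of $S$ with $g\equiv\epsilon_k$ along that subsequence, or simply observes $y-f(x)\in\bigcap_k(C+\epsilon_k\bar c)\subset C$ when $\epsilon_k\downarrow 0$ and otherwise splits off a subsequence with $\epsilon_k\le 0$.

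The main obstacle is the construction of the continuous interpolating function $g$ on a general metric space. On a metric space this is routine: after extracting a subsequence one may assume $d(x_k,x)$ is strictly decreasing to $0$ with all $x_k\neq x$ distinct, choose disjoint balls $B(x_k,r_k)$ avoiding $x$, and on each such ball define $g$ to be a continuous bump equal to $\epsilon_k$ at $x_k$ and $0$ on the boundary (e.g. $g(u)=\epsilon_k\max\{0,1-d(u,x_k)/r_k\}$ with $r_k$ small enough that the supports are disjoint and $r_k\to 0$), and $g=0$ elsewhere; continuity at $x$ follows because $|g(u)|\le\max_{j\ge N}|\epsilon_j|$ whenever $d(u,x)$ is small, and $\epsilon_j\to 0$. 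Everything else is bookkeeping with the definitions of $C$, $S$, and $C$-\lsc\ with respect to $\bar c$.
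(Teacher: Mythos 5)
Your proof is correct and follows essentially the same route as the paper's: the forward direction uses the identical shift $y':=y-g(x)\bar c$, $\epsilon_k:=g(x_k)-g(x)\to0$ followed by Definition~\ref{D14}, and the converse likewise manufactures a continuous $g$ interpolating the values $\epsilon_k$ at the points $x_k$ and then applies the closedness of the resulting set $S$. The only difference is that the paper obtains $g$ from the Tietze extension theorem whereas you build it by hand with disjoint bumps (and you treat the degenerate case of repeated points more carefully than the paper does); this is a technical variation, not a different argument.
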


\begin{proof}
Suppose $f$ is $C$-\lsc\ with respect to $\bar c$, $y\in Y$, $g:X\to\R$ is continuous, and let $x_k\in S$ and $x_k\to x$.
By the definition of the set $S$, $y'-f(x_k)-\epsilon_k\bar c\in C$, where $y':=y-g(x)\bar c$ and $\epsilon_k:=g(x_k)-g(x)\to0$ as $k\to\infty$.
By Definition~\ref{D14}, $y-f(x)-g(x)\bar c =y'-f(x)\in C$, i.e., $x\in S$.

Conversely, suppose the set $S$ is closed for any $y\in Y$ and any continuous function $g:X\to\R$, and consider arbitrary sequences $\{x_k\}\subset X$ and $\{\epsilon_k\}\subset\R$ and a point $y\in Y$ such that $y-f(x_k)-\epsilon_k\bar c\in C$ ($k=1,2,\ldots$), $x_k\to x$ and $\epsilon_k\to0$ as $k\to\infty$.
Passing to subsequences if necessary, suppose that all elements of $\{x_k\}$ are different.
By Tietze extension theorem, there exists a continuous function $g:X\to\R$ such that $g(x_k)=\epsilon_k$ ($k=1,2,\ldots$).
Then $x_k\in C$ ($k=1,2,\ldots$), $g(x)=0$ and, since set $S$ is closed, $y-f(x)\in C$.
\end{proof}

Similar to the corresponding property considered by Isac \cite{Isa97} (cf. \cite[p.~188]{CheHuaYan05}),
the $C$-lower semicontinuity with respect to $\bar c$ occupies an intermediate position between the $C$-lower semicontinuity and the closedness of lower sublevel sets.

Recall that $f$ is called \emph{$C$-\lsc} \cite{FinQuaTro03,GopRiaTamZal03} at $\bx\in X$ if for every neighbourhood $V$ of $f(\bx)$ there exists a neighbourhood $U$ of $\bx$ such that $f(U)\subset V+C$.
We say that $f$ is $C$-\lsc\ if it is $C$-\lsc\ at all $\bx\in X$.
Obviously, if $f$ is continuous at $\bx$, it is $C$-\lsc\ at this point.

\begin{prop}\label{P16}
Let $\bar c\in\Int C$.
\begin{enumerate}
\item
If $f$ is $C$-\lsc, then it is $C$-\lsc\ with respect to $\bar c$.
\item
If $f$ is $C$-\lsc\ with respect to $\bar c$, then, for any $y\in Y$, the lower sublevel set $S_y^\le$ is closed.
\end{enumerate}
\end{prop}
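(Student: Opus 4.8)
The plan is to treat the two statements separately; (ii) is almost immediate, while (i) is where the work lies. For (ii) I would argue directly from Definition~\ref{D14}: given $y\in Y$ and a sequence $x_k\in S_y^\le$ with $x_k\to x$, we have $y-f(x_k)\in C$, that is, $y-f(x_k)-\epsilon_k\bar c\in C$ with the admissible choice $\epsilon_k:=0\to0$; Definition~\ref{D14} then gives $y-f(x)\in C$, i.e., $x\in S_y^\le$, and so $S_y^\le$ is closed. (Equivalently, one may invoke Proposition~\ref{P15} with $g\equiv0$.)

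For (i), suppose $f$ is $C$-\lsc\ and take sequences $\{x_k\}\subset X$, $\{\epsilon_k\}\subset\R$ and a point $y\in Y$ as in Definition~\ref{D14}, so that $y-f(x_k)-\epsilon_k\bar c\in C$, $x_k\to x$ and $\epsilon_k\to0$; the target is $y-f(x)\in C$. First I would remove the sign of $\epsilon_k$: since $\bar c\in C$, one checks that $y-f(x_k)+|\epsilon_k|\bar c\in C$ for every $k$, so, setting $\eta_k:=|\epsilon_k|\to0$, we have $y-f(x_k)+\eta_k\bar c\in C$ with $\eta_k\ge0$. Next I would exploit $C$-lower semicontinuity at $x$: applying the defining condition to the neighbourhoods $f(x)+\frac1m\B$ of $f(x)$ and using $x_k\to x$, a routine argument produces, for all large $k$, a decomposition $f(x_k)=f(x)+b_k+\gamma_k$ with $\gamma_k\in C$ and $\|b_k\|\to0$. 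Substituting this and absorbing $\gamma_k$ into $C$ gives $y-f(x)-b_k+\eta_k\bar c\in C$.

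The decisive step is to use the assumption $\bar c\in\Int C$ to cancel the perturbation $b_k$: picking $\rho>0$ with $\bar c+\rho\B\subset C$, a short homogeneity argument shows that $v+\tfrac{\|v\|}{\rho}\,\bar c\in C$ for every $v\in Y$; applying this with $v:=b_k$ and adding the resulting inclusion to the previous one yields
\begin{gather*}
y-f(x)+\Big(\eta_k+\tfrac{\|b_k\|}{\rho}\Big)\bar c\in C,
\end{gather*}
where the coefficient tends to $0$ as $k\to\infty$. Passing to the limit and using that $C$ is closed, we conclude $y-f(x)\in C$, which proves (i).

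The hard part is exactly this bridge between the two notions of lower semicontinuity: the topological definition of $C$-\lsc\ only places $f(x_k)$ in $f(x)+C$ up to an error $b_k\to0$, not on the nose, and it is the solidity assumption $\bar c\in\Int C$ that lets this error be swallowed by an arbitrarily small multiple of $\bar c$; the concluding limit then relies on $C$ being closed. The sign bookkeeping for $\epsilon_k$ and the extraction of the decomposition $f(x_k)=f(x)+b_k+\gamma_k$ from the neighbourhood condition are routine.
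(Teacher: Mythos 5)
Your proof is correct, and part (ii) coincides with the paper's argument. For part (i) your overall structure is the same as the paper's (use the neighbourhood form of $C$-lower semicontinuity at $x$ to write $f(x_k)-f(x)$ as a small vector plus an element of $C$, then pass to the limit using closedness of $C$), but you handle the error terms differently, and your route is more elaborate than necessary. The paper does not replace $\epsilon_k$ by $|\epsilon_k|$ and does not invoke $\bar c\in\Int C$ at all: it simply observes that $y-f(x)\in \epsilon_k\bar c+(\epsilon/2)\B+C\subset\epsilon\B+C$ once $\|\epsilon_k\bar c\|$ is small, and then uses that $\bigcap_{\epsilon>0}(\epsilon\B+C)=C$ because $C$ is closed. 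In other words, the norm-small perturbations $b_k$ and $\epsilon_k\bar c$ can both be swallowed by a shrinking ball rather than by a small multiple of $\bar c$; your ``decisive step'' via $\bar c+\rho\B\subset C$ and the inclusion $v+\frac{\|v\|}{\rho}\bar c\in C$ is a correct homogeneity argument, but it is a detour, and your closing remark that the solidity assumption $\bar c\in\Int C$ is what makes the bridge possible is a mischaracterization --- part (i) holds for any $\bar c\in C$ (indeed for any $\bar c\in Y$), since only $\epsilon_k\bar c\to0$ in norm and the closedness of $C$ are used. What your version buys is a statement of the form $y-f(x)+t_k\bar c\in C$ with $t_k\downarrow0$, which is marginally more structured than $y-f(x)\in\epsilon\B+C$, but nothing in the proposition requires that extra structure.
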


\begin{proof}
(i) Suppose that $f$ is $C$-\lsc, $x\in X$, $y\in Y$, $\epsilon>0$, and $U$ is a neighbourhood of $x$ such that $f(U)\subset f(x)+(\epsilon/2)\B+C$.
Let $\{x_k\}\subset X$, $\{\epsilon_k\}\subset\R$, $y-f(x_k)-\epsilon_k\bar c\in C$ ($k=1,2,\ldots$), $x_k\to x$, and $\epsilon_k\to0$ as $k\to\infty$.
Then, for all sufficiently large $k$, we have $\|\epsilon_k\bar c\|<\epsilon$, $x_k\in U$, and consequently, $f(x_k)-f(x)\in(\epsilon/2)\B+C$.
Hence, $y-f(x)\in\epsilon\B+C$.
Since $C$ is closed and $\epsilon$ is arbitrary, it follows that $y-f(x)\in C$, i.e., $f$ is $C$-\lsc\ with respect to $\bar c$.

(ii) Suppose that $f$ is $C$-\lsc\ with respect to $\bar c$, $y\in Y$, $\{x_k\}\subset S_y^\le$, i.e., $y-f(x_k)\in C$, and $x_k\to x\in X$ as $k\to\infty$.
It follows from Definition~\ref{D14} with $\epsilon_k=0$ for all $k$ that $y-f(x)\in C$, i.e., $x\in S_y^\le$.
Hence, $S_y^\le$ is closed.
\end{proof}

Replacing the assumption of $C$-lower semicontinuity of $f$ with respect to $\bar c$ in Proposition~\ref{P15} by the stronger property of $C$-lower semicontinuity allows one to partially strengthen its conclusion.

\begin{prop}\label{P11}
Let $\bar c\in C$.
If $f$ is $C$-\lsc, then the set \eqref{P15-1}
is closed for any $y\in Y$ and any \lsc\ function $g:X\to\R\cup\{+\infty\}$.
\end{prop}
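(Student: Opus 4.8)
The plan is to follow the sequential argument used for Proposition~\ref{P15}, upgraded to a merely \lsc, possibly $+\infty$-valued, $g$ by exploiting that $C$-lower semicontinuity of $f$ is strictly stronger than $C$-lower semicontinuity with respect to $\bar c$. First I would dispose of the degenerate case $\bar c=0$ separately: then $S=S_y^\le(f)$, and since $f$ is $C$-\lsc, this set is closed — indeed, if $x_k\to x$ and $y-f(x_k)\in C$, then $C$-lower semicontinuity gives $f(x_k)-f(x)\in\epsilon\B+C$ for all large $k$, hence $y-f(x)\in C+\epsilon\B$ for every $\epsilon>0$, so $y-f(x)\in C$. (The same computation shows that every lower sublevel set of a $C$-\lsc\ function is closed, which I will reuse below.) So assume $\bar c\ne0$, take a sequence $\{x_k\}\subset S$ with $x_k\to x$, and set $\gamma:=\liminf_k g(x_k)$; lower semicontinuity of $g$ gives $\gamma\ge g(x)$, and since $g$ never takes the value $-\infty$ we have $\gamma\in(-\infty,+\infty]$.

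The main obstacle I anticipate — it has no counterpart in the continuous setting of Proposition~\ref{P15} — is the possibility $\gamma=+\infty$, and the first substantive step is to rule it out. If $g(x_k)\to+\infty$, then for each fixed $m>0$ we have $g(x_k)\ge m$ eventually, and adding $(g(x_k)-m)\bar c\in C$ to $y-f(x_k)-g(x_k)\bar c\in C$ shows that $x_k$ eventually lies in the lower sublevel set $S_{y-m\bar c}^\le(f)$, which is closed. Hence $x\in S_{y-m\bar c}^\le(f)$, i.e.\ $y-f(x)-m\bar c\in C$, for every $m>0$; dividing by $m$ and letting $m\to\infty$ forces $-\bar c\in C$ (as $C$ is closed), so $\bar c\in C\cap(-C)=\{0\}$, contradicting $\bar c\ne0$. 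Therefore $\gamma<+\infty$, and in particular $g(x)\le\gamma<+\infty$, so that $g(x)\bar c$ is a genuine element of $Y$.

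Finally I would run the Proposition~\ref{P15}-type computation along a subsequence realizing the liminf — after passing to it, still $x_k\to x$ and now $g(x_k)\to\gamma$. Given $\epsilon>0$, $C$-lower semicontinuity of $f$ at $x$ yields a neighbourhood $U$ of $x$ with $f(U)\subset f(x)+\epsilon\B+C$, so for all large $k$ both $f(x_k)-f(x)\in\epsilon\B+C$ and $|g(x_k)-\gamma|\le\epsilon$. Decomposing
\[
y-f(x)-\gamma\bar c=\bigl(y-f(x_k)-g(x_k)\bar c\bigr)+\bigl(f(x_k)-f(x)\bigr)+\bigl(g(x_k)-\gamma\bigr)\bar c
\]
into summands lying in $C$, in $\epsilon\B+C$, and in $\epsilon\|\bar c\|\B$ respectively, and using $C+C=C$, I get $y-f(x)-\gamma\bar c\in C+\epsilon(1+\|\bar c\|)\B$; since $\epsilon>0$ is arbitrary and $C$ is closed, $y-f(x)-\gamma\bar c\in C$. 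Because $\gamma-g(x)\ge0$ and $\bar c\in C$ give $(\gamma-g(x))\bar c\in C$, it follows that $y-f(x)-g(x)\bar c=\bigl(y-f(x)-\gamma\bar c\bigr)+(\gamma-g(x))\bar c\in C$, i.e.\ $x\in S$, which completes the argument. Apart from the elimination of the $\gamma=+\infty$ case and the passage to a liminf subsequence, this is the routine cone bookkeeping already familiar from Proposition~\ref{P15}.
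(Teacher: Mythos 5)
Your proof is correct, and at its core it follows the same route as the paper's: a sequential closedness argument in which one passes to a subsequence along which $g(x_k)$ converges to $\alpha=\liminf_k g(x_k)\ge g(x)$, shows $y-f(x)-\alpha\bar c\in C$ via the $C$-lower semicontinuity of $f$ (your inline neighbourhood computation is exactly the content of Proposition~\ref{P16}(i), which the paper invokes implicitly by writing the condition as $y'-f(x_k)-\epsilon_k\bar c\in C$ with $y'=y-\alpha\bar c$, $\epsilon_k=g(x_k)-\alpha\to0$), and then adds $(\alpha-g(x))\bar c\in C$. Where you go beyond the paper is in rigour rather than strategy: the paper's proof is written for $g:X\to\R$ and tacitly assumes $\alpha$ finite, whereas the statement allows $g$ to take the value $+\infty$, so $\liminf_k g(x_k)=+\infty$ is a genuine possibility that must be excluded before $\epsilon_k:=g(x_k)-\alpha$ even makes sense. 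Your elimination of that case --- trapping $x$ in the closed sublevel sets $S_{y-m\bar c}^\le(f)$ for every $m>0$ and deriving $-\bar c\in C$, contradicting pointedness when $\bar c\ne0$ --- together with the separate (trivial) treatment of $\bar c=0$, closes a small gap in the published argument. (Both proofs use closedness of $C$ in the limiting step, consistent with the paper's own proof of Proposition~\ref{P16}.)
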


\begin{proof}
Suppose $f$ is $C$-\lsc, $y\in Y$, $g:X\to\R$ is \lsc, and let $x_k\in S$ and $x_k\to x$.
Without loss of generality, $g(x_k)\to\alpha\ge g(x)$.
By the definition of the set $S$, $y'-f(x_k)-\epsilon_k\bar c\in C$, where $y':=y-\alpha\bar c$ and $\epsilon_k:=g(x_k)-\alpha\to0$ as $k\to\infty$.
By the definition of $C$-lower semicontinuity, $y'-f(x)\in C$, and consequently,
$$y-f(x)-g(x)\bar c =y'-f(x)+(\alpha-g(x))\bar c\in C+(\alpha-g(x))\bar c\subset C,$$
i.e., $x\in S$.
\end{proof}

\section{Borwein--Preiss vector variational principle}\label{S3}

In this section, we extend to vector-valued functions the metric space version of the Borwein--Preiss variational principle due to Li and Shi \cite{LiShi00} (cf. \cite{BorZhu05}).

The theorem below involves sequences indexed by $i\in\N$.
The set of all indices is subdivided into two groups: with $i<N$ and $i\ge N$ where $N$ is an `integer' which is allowed to be infinite: $N\in\N\cup\{+\infty\}$.
If $N=+\infty$, then the first subset of indices is infinite, while the second one is empty.
This trick allows us to treat the cases of a finite and infinite sets of indices within the same framework.
Another convention in this section concerns summation over an empty set of indices: $\sum_{k=0}^{-1}a_k=0$.

The next theorem presents a vector version of the Borwein--Preiss variational principle.

\begin{thm}\label{main}
Let $X$ be a complete metric space, Y a normed vector space, $C$ a pointed closed convex cone in $Y$, $\bar c\in\Int C$ and let a function $f : X \rightarrow Y$ be $C$-\lsc\ with respect to $\bar c$.
Suppose that $\rho$ is a gauge-type function, $\epsilon>0$, $\{\epsilon_{i}\}_{i=1}^\infty$ and $\{\delta_{i}\}_{i=0}^\infty$ are sequences such that
\begin{itemize}
\item
$\epsilon_{i}>0$ for all $i\in\N$ and $\epsilon_{i}\downarrow0$ as $i\to\infty$;
\item
$\de_{i}>0$ for all $i<N$ and $\de_{i}=0$ for all $i\ge N$, where $N\in\N\cup\{+\infty\}$.
\end{itemize}
If $x_0\in X$ is an $\epsilon$-minimal point of $f$ with respect to $\de_0\rho$ and $\bar c$,
then there exist a point $\bar x\in X$ and a sequence $\{x_{i}\}_{i=1}^\infty\subset X$ such that $x_i\to\bx$ as $i\to\infty$ and
\begin{enumerate}
\item
$\ds\rho(\bx,x_0) \le \frac{\epsilon}{\de_0 d(\bar c,X\setminus C)}$;
\item
$\ds\rho(\bx,x_{i}) <\epsilon_i$ $(i=1,2,\ldots)$;
\item
if $N=+\infty$, then the
series $\sum\limits_{i=0}^{\infty} \delta_{i}\rho(\bar x,x_{i})$ is convergent and
\begin{equation}\label{su11}
f(x_0)- f(\bar x)-\left(\sum\limits_{i=0}^{\infty} \delta_{i}\rho(\bar x,x_{i})\right)\bar c\in C;
\end{equation}
otherwise the
series $\sum_{i=N-1}^{\infty}\rho(x_{i+1},x_{i})$ is convergent and
\begin{multline}\label{su11-2}
f(x_{0})-f(\bx)
- \Biggl(\sum_{i=0}^{N-2}\delta_{i} \rho(\bx,x_{i})
\\
+ \delta_{N-1}\sup_{n\ge N-1}\left(\sum_{i=N-1}^{n-1} \rho(x_{i+1},x_{i})+ \rho(\bx,x_{n})\right)\Biggr)\bar c\in C;
\end{multline}
\item
for any $x \in X\setminus\{\bar x\}$, there exists an $m_0\ge N$ such that, for all $m\ge m_0$,\\
if $N=+\infty$, then,
\begin{equation}\label{su10}
f(\bar x)+ \left(\sum\limits_{i=0}^{\infty}\delta_{i}\rho(\bar x,x_{i})\right)\bar c - f(x)- \left(\sum\limits_{i=0}^{m} \delta_{i}\rho(x,x_{i})\right)\bar c \not\in C;
\end{equation}
otherwise,
\begin{multline}\label{us10}
f(\bx)+\left(\sum_{i=0}^{N-2}\delta_{i} \rho(\bx,x_{i})+ \delta_{N-1}\sup_{n\ge m} \left(\sum_{i=m}^{n-1}\rho(x_{i+1},x_{i})+ \rho(\bx,x_{n})\right)\right)\bar c
\\
- f(x)-\left(\sum_{i=0}^{N-2}\delta_{i}\rho(x,x_{i})
+ \delta_{N-1}\rho(x,x_{m})\right)\bar c\notin C.
\end{multline}
\end{enumerate}
\end{thm}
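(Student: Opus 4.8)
The plan is to follow the scalar argument of Li and Shi~\cite{LiShi00}, replacing the scalar inequalities throughout by inclusions in $C$ measured along the fixed direction $\bar c$. The one auxiliary fact I would establish first is elementary: writing $r:=d(\bar c,X\setminus C)$ (positive, since $\bar c\in\Int C$ gives $\bar c+r\B\subset C$), every $v\in t\bar c+C$ with $t\ge0$ satisfies $\|v\|\ge tr$ — for if $\|v\|<tr$ then both $v$ and $v-tr\,v/\|v\|$ lie in $C$, which forces $v=0$ and hence $t=0$ by pointedness. Using this, I would construct inductively a sequence $x_0,x_1,\dots$ (with $x_0$ the given point) and nonempty closed sets $X\supset S_0\supset S_1\supset\cdots$ with $x_i\in S_i$, show that their $d$-diameters tend to $0$, put $\{\bar x\}:=\bigcap_iS_i$ and get $x_i\to\bar x$ from completeness; all four conclusions are then read off from the construction.

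First I would set $S_0:=S_{\delta_0\rho,\bar c}^\le(f,x_0)$, which is closed by Proposition~\ref{P15} (as $f$ is $C$-\lsc\ with respect to $\bar c$ and $\delta_0\rho(\cdot,x_0)$ is continuous) and contains $x_0$; for $x\in S_0$, combining $f(x_0)-f(x)\in\delta_0\rho(x,x_0)\bar c+C$ with the consequence $f(x)-f(x_0)\in C+\epsilon\B$ of the $\epsilon$-minimality of $x_0$ yields a vector in $\delta_0\rho(x,x_0)\bar c+C$ of norm $\le\epsilon$, whence $\rho(x,x_0)\le\epsilon/(\delta_0r)$, and the same estimate shows that $f$ — hence $f+q\bar c$ for any continuous $q\ge0$ — stays ``bounded below along $\bar c$'' on $S_0$. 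For the inductive step I would carry along such an accumulated perturbation $q_n$ (with $q_0\equiv0$) and an active coefficient $\delta'_n>0$, equal to $\delta_n$ while $n<N$ and to $\delta_{N-1}$ once $n\ge N$; given $S_{n-1}\ni x_{n-1}$, I would pick a point $x_n\in S_{n-1}$ that is $\epsilon_n\delta'_n$-minimal for $f+q_n\bar c$ on $S_{n-1}$ in the direction $\bar c$, i.e.\ no $z\in S_{n-1}$ satisfies $f(x_n)+q_n(x_n)\bar c-f(z)-q_n(z)\bar c-\epsilon_n\delta'_n\bar c\in C$ (such a point exists by the lower boundedness above and the norm estimate), set
$$S_n:=\set{x\in S_{n-1}\mid f(x_n)+q_n(x_n)\bar c-f(x)-\bigl(q_n(x)+\delta'_n\rho(x,x_n)\bigr)\bar c\in C}$$
(closed by Proposition~\ref{P15}, and containing $x_n$), and update $q_{n+1}:=q_n+\delta_n\rho(\cdot,x_n)$ if $n<N-1$ but $q_{n+1}:=q_n+\delta_{N-1}\rho(x_{n+1},x_n)$ — a constant increment — if $n\ge N-1$. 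This switch of the accumulation rule at $N-1$ is exactly what makes path lengths $\sum_{i=N-1}^{n-1}\rho(x_{i+1},x_i)$, rather than terms tied to the earlier $x_i$, appear in \eqref{su11-2} and \eqref{us10}.

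Next I would show that for $x\in S_n$ the $\epsilon_n\delta'_n$-minimality of $x_n$ forces $\rho(x,x_n)<\epsilon_n$ (otherwise the defining inclusion of $S_n$ together with $(\rho(x,x_n)-\epsilon_n)\delta'_n\bar c\in C$ would exhibit $z:=x$ improving on $x_n$ by $\epsilon_n\delta'_n\bar c$), so by property~(ii) of Definition~\ref{D2} the $d$-diameters of the $S_n$ tend to $0$ and completeness gives $\bigcap_nS_n=\{\bar x\}$ with $x_n\to\bar x$. Conclusion~(i) is then $\bar x\in S_0$ and conclusion~(ii) is $\bar x\in S_i$. For~(iii): the inclusions defining $S_i\subset S_{i-1}$, used at $x_i\in S_{i-1}$, make the sequence $f(x_n)+q_n(x_n)\bar c$ non-increasing with respect to $C$ and keep it in $f(x_0)-C$; evaluating at $\bar x\in S_n$ gives $f(x_0)-f(\bar x)-\bigl(q_n(\bar x)+\delta'_n\rho(\bar x,x_n)\bigr)\bar c\in C$ for every $n$, and since $q_n(\bar x)+\delta'_n\rho(\bar x,x_n)$ equals $\sum_{i=0}^{n}\delta_i\rho(\bar x,x_i)$ when $N=+\infty$ and $\sum_{i=0}^{N-2}\delta_i\rho(\bar x,x_i)+\delta_{N-1}\bigl(\sum_{i=N-1}^{n-1}\rho(x_{i+1},x_i)+\rho(\bar x,x_n)\bigr)$ when $N<+\infty$, the norm estimate bounds these quantities (so the relevant series converge) and a passage to the limit, respectively to the supremum over $n$ (using continuity of $\rho$ and closedness of $C$), gives \eqref{su11}, respectively \eqref{su11-2}. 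For~(iv): a further chaining of the $S_n$-inclusions (with a supremum over the telescoping index when $N<+\infty$) shows that $x\in S_m$ whenever \eqref{su10}, respectively \eqref{us10}, fails at index $m$; since $\bar x\in S_m$ while $\mathrm{diam}_d(S_m)\to0$, one has $x\notin S_m$, hence \eqref{su10}, respectively \eqref{us10}, holds, for all sufficiently large $m$, which gives the required $m_0$.

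The hard part will be the bookkeeping around the index $N-1$: because $\rho$ need not satisfy a triangle inequality, once the genuine Borwein--Preiss coefficients are exhausted one cannot keep a single term $\delta_{N-1}\rho(\cdot,x_n)$ pinned to the moving point $\bar x$ and must carry the path length $\sum\rho(x_{i+1},x_i)$ instead; I would need to check that this length stays bounded (equivalently, that the associated $\bar c$-potential stays bounded below), that an $\epsilon_n\delta'_n$-minimal point can be selected at every step so that the construction does not stall, and that the limits of the telescoped estimates coincide with the suprema over $n$ appearing in \eqref{su11-2} and \eqref{us10}. Handling the two regimes $i<N$ and $i\ge N$ within a single induction, after Li and Shi, is what lets Theorem~\ref{main} simultaneously subsume the Borwein--Preiss ($N=+\infty$) and Ekeland ($N<+\infty$) cases.
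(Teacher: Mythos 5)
Your construction is correct and shares the paper's architecture: the nested closed sets you define via $q_n$ and $\delta'_n$ are exactly the paper's sets \eqref{su8} and \eqref{su4} with $j_i=\min\{i,N-1\}$, and the completeness, telescoping and series-convergence arguments (including the switch to path lengths $\sum\rho(x_{i+1},x_i)$ once $i\ge N-1$) coincide with the paper's. The one genuine divergence is the selection step. The paper takes a norm-one functional $y^*\in Y^*$, nonnegative on $C$, with $\lambda:=\langle y^*,\bar c\rangle\ge d(\bar c,X\setminus C)$, and chooses $x_i$ as a $\lambda\delta_{j_i}\epsilon_i$-near-minimizer over $S_{i-1}$ of the scalarized function $\langle y^*,f(\cdot)\rangle+\lambda\sum_{k=0}^{j_i-1}\delta_k\rho(\cdot,x_k)$, so that $\rho(x,x_i)<\epsilon_i$ on $S_i$ drops out by subtracting two scalar inequalities. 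You instead choose $x_n$ directionally $\epsilon_n\delta'_n$-minimal along $\bar c$ and obtain the same diameter estimate from the primal bound $\|v\|\ge t\,d(\bar c,X\setminus C)$ for $v\in t\bar c+C$ (a fact the paper also uses, but only for the series convergence in part (iii)). Your route is more self-contained — no dual functional, hence no appeal to separation — but it shifts the burden onto the existence of the directionally minimal point, which you assert rather than prove: one needs the standard iteration (if every point of $S_{n-1}$ could be improved by $\epsilon_n\delta'_n\bar c$, chain the improvements, use $S_{n-1}\subset S_0$ together with the $\epsilon$-minimality of $x_0$ to bound the accumulated multiple of $\bar c$, and contradict the norm bound), whereas the paper's near-infimum selection is available as soon as the scalarized function is bounded below on $S_0$. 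One loose end you share with the paper: in part (iv), ``$x\notin S_m$ for all large $m$'' directly yields failure of the defining inclusion of $S_m$ only at the first index where $x$ drops out of the chain; to get it for all $m\ge m_0$ one must chain the inclusions forward from $m_0$ to $m$ and use monotonicity of $q_m(x)$ in $m$ — your phrase ``a further chaining'' presumably covers this, and the paper glosses over the same point.
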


\begin{proof}
(i) and (ii).
Since $C$ is a pointed convex cone with $\Int C\ne\emptyset$,
there exists an element $y^{*} \in Y^{*}$ such that $\|y^*\|=1$ and $\langle y^{*},c\rangle \ge 0$ for all $c \in C$.
Set $\lambda:=\langle y^{*},\bar c\rangle$.
Since $\bar c\in\Int C$, we have $\lambda\ge d(\bar c,X\setminus C)$.

We define sequences $\{x_{i}\}$ and $\{S_{i}\}$ inductively.
Set
\begin{equation}\label{su8}
S_{0} := \{x \in X\mid f(x_{0})-f(x)-\de_0\rho(x,x_{0})\bar c \in C\}.
\end{equation}
Obviously, $x_{0}\in S_{0}$.
Since $f$ is $C$-\lsc\ with respect to $\bar c$, by Proposition~\ref{P15}, subset $S_{0}$ is closed: it is sufficient to take $y:=f(x_{0})$ and $g(x):=\de_0\rho(x,x_{0})$.
For any $x \in S_{0}$, we have
$$\langle y^{*},f(x_{0}) - f(x)\rangle\ge \lambda\de_0\rho(x,x_{0}).$$
At the same time, by Definition~\ref{D11}, $f(x)-f(x_{0})\in C+\epsilon\B$.
Hence,
$$\langle y^{*},f(x) - f(x_{0})\rangle\ge-\epsilon.$$
It follows from the last two inequalities that
\begin{equation}\label{su6}
\rho(x,x_{0}) \le\frac{\epsilon}{\lambda} \le \frac{\epsilon}{\de_0d(\bar c,X\setminus C)}.
\end{equation}

For $i=0,1,\ldots$, denote $j_i:=\min\{i,N-1\}$, i.e., $j_i$ is the largest integer $j\le i$ such that $\de_j>0$.
Let $i\in\N$ and
suppose $x_{0}, \ldots,x_{i-1}$ and $S_{0},\ldots,S_{i-1}$ have been defined.
We choose $x_{i} \in S_{i-1}$ such that
\begin{multline}\label{su2}
\langle y^{*},f(x_{i})\rangle +\lambda\sum_{k=0}^{j_i-1}\delta_{k}\rho(x_{i},x_{k})\\
< \inf_{x\in S_{i-1}}\left(\langle y^{*},f(x)\rangle + \lambda\sum_{k=0}^{j_i-1}\delta_{k}\rho(x,x_{k})\right) + \lambda\delta_{j_i}\epsilon_i
\end{multline}
and define
\begin{multline}\label{su4}
S_{i} := \Biggl\{x \in S_{i-1}\mid f(x_{i})- f(x)
\\
-\left(\sum_{k=0}^{j_i-1}\delta_{k} (\rho(x,x_{k})-\rho(x_{i},x_{k}))+ \delta_{j_i}\rho(x,x_{i})\right)\bar c\in C\Biggr\}.
\end{multline}
Obviously, $x_{i}\in S_{i}$.
Since $f$ is $C$-\lsc\ with respect to $\bar c$, by Proposition~\ref{P15}, subset $S_{i}$ is closed: it is sufficient to take $y:=f(x_{i})$ and $g(x):=\sum_{k=0}^{j_i-1}\delta_{k} (\rho(x,x_{k})-\rho(x_i,x_{k})) + \delta_{j_i}\rho(x,x_{i})$.
For any $x \in S_{i}$, we have
$$\langle y^{*},f(x_{i}) - f(x)\rangle+ \lambda\sum_{k=0}^{j_i-1}\delta_{k} (\rho(x_{i},x_{k})-\rho(x,x_{k})) -\lambda\delta_{j_i}\rho(x,x_{i})\ge0,$$
and consequently, making use of \eqref{su2},
\begin{multline}\label{su7}
\rho(x,x_{i})\le\frac{1}{\lambda\delta_{j_i}} \Biggl(\langle y^{*},f(x_{i})\rangle+ \lambda\sum_{k=0}^{j_i-1}\delta_{k}\rho(x_{i},x_{k})
\\
-\Bigl(\langle y^{*},f(x)\rangle +\lambda\sum_{k=0}^{j_i-1} \delta_{k}\rho(x,x_{k})\Bigr)\Biggr)<\epsilon_i.
\end{multline}
We can see that, for all $i\in\N$, subsets $S_{i}$ are nonempty and closed, $S_{i}\subset S_{i-1}$, and $\sup_{x\in S_i}\rho(x,x_{i}) \rightarrow 0$ as $i\to\infty$.
Since $\rho$ is a gauge-type function, we also have $\sup_{x\in S_i}d(x,x_{i}) \rightarrow 0$ and consequently,
${\rm diam} (S_{i}) \rightarrow 0.$
Since $X$ is complete, $\cap_{i=0}^{\infty}S_{i}$ contains exactly one point; let it be $\bx$.
Hence, $\rho(\bx,x_{i}) \rightarrow 0$ and $x_{i} \rightarrow \bar x$ as $i \rightarrow \infty$.
Thanks to \eqref{su6} and \eqref{su7}, $\bx$ satisfies (i) and (ii).
\medskip

Before proceeding to the proof of claim (iii), we prepare several building blocks which are going to be used when proving claims (iii) and (iv).

Let integers $m$, $n$ and $i$ satisfy $0\le m\le i<n$.
Since $x_{i+1}\in S_i$ and $\bx\in S_n$, it follows from \eqref{su8} (when $i=0$) and \eqref{su4} that
\begin{gather}\label{su3-}
f(x_{i})- f(x_{i+1})
-\left(\sum_{k=0}^{j_i-1}\delta_{k} (\rho(x_{i+1},x_{k})-\rho(x_{i},x_{k}))+ \delta_{j_i}\rho(x_{i+1},x_{i})\right)\bar c\in C,
\\\label{su3-+}
f(x_{n})- f(\bx)
- \left(\sum_{k=0}^{j_n-1}\delta_{k} (\rho(\bx,x_{k})-\rho(x_{n},x_{k}))+ \delta_{j_n}\rho(\bx,x_{n})\right)\bar c\in C.
\end{gather}
We are going to add together inclusions \eqref{su3-} from $i=m$ to $i=n-1$ and inclusion \eqref{su3-+}.
Depending on the value of $N$, three cases are possible.

\underline{If $N>n$}, then $j_i=i$ and $j_n=n$.
Adding inclusions \eqref{su3-} from $i=m$ to $i=n-1$, we obtain
\begin{equation*}
f(x_{m})- f(x_{n})
-\left(\sum_{k=0}^{n-1}\delta_{k} \rho(x_{n},x_{k})-\sum_{k=0}^{m-1}\delta_{k} \rho(x_{m},x_{k})\right)\bar c\in C.
\end{equation*}
Adding the last inclusion and inclusion \eqref{su3-+}, we arrive at
\begin{equation}\label{su3.3-}
f(x_{m})- f(\bx)
-\left(\sum_{k=0}^{n}\delta_{k} \rho(\bx,x_{k})-\sum_{k=0}^{m-1}\delta_{k} \rho(x_{m},x_{k})\right)\bar c\in C.
\end{equation}

\underline{If $N\le m$}, then $j_i=N-1$ and $j_n=N-1$.
Adding inclusions \eqref{su3-} from $i=m$ to $i=n-1$, we obtain
\begin{gather*}
f(x_{m})- f(x_{n})
-\left(\sum_{k=0}^{N-2}\delta_{k} (\rho(x_{n},x_{k})-\rho(x_{m},x_{k}))+ \delta_{N-1}\sum_{k=m}^{n-1}\rho(x_{k+1},x_{k})\right)\bar c\in C.
\end{gather*}
Adding the last inclusion and inclusion \eqref{su3-+}, we arrive at
\begin{multline}\label{su3.4-}
f(x_{m})- f(\bx)
- \Biggl(\sum_{k=0}^{N-2}\delta_{k} (\rho(\bx,x_{k})-\rho(x_{m},x_{k}))
\\
+ \delta_{N-1}\left(\sum_{k=m}^{n-1}\rho(x_{k+1},x_{k})+ \rho(\bx,x_{n})\right)\Biggr)\bar c\in C.
\end{multline}

\underline{If $m<N\le n$}, we add inclusions \eqref{su3-} separately from $i=m$ to $i=N-1$ and from $i=N$ to $i=n-1$ and obtain, respectively,
\begin{gather*}
f(x_{m})- f(x_{N})
-\left(\sum_{k=0}^{N-1}\delta_{k} \rho(x_{N},x_{k})-\sum_{k=0}^{m-1}\delta_{k} \rho(x_{m},x_{k})\right)\bar c\in C,
\\
f(x_{N})- f(x_{n})
-\left(\sum_{k=0}^{N-2}\delta_{k} (\rho(x_{n},x_{k})-\rho(x_{N},x_{k}))+ \delta_{N-1}\sum_{k=N}^{n-1}\rho(x_{k+1},x_{k})\right)\bar c\in C.
\end{gather*}
Adding the last two inclusions and inclusion \eqref{su3-+} together, we obtain
\begin{multline}\label{su3.3-+}
f(x_{m})- f(\bx)
- \Biggl(\sum_{k=0}^{N-2}\delta_{k} \rho(\bx,x_{k})-\sum_{k=0}^{m-1}\delta_{k} \rho(x_{m},x_{k})
\\
+ \delta_{N-1}\left(\sum_{k=N-1}^{n-1}\rho(x_{k+1},x_{k})+ \rho(\bx,x_{n})\right)\Biggr)\bar c\in C.
\end{multline}

(iii) When $N=+\infty$, we set $m=0$ in the inclusion \eqref{su3.3-}:
\begin{equation}\label{su9}
f(x_{0})- f(\bx)- \left(\sum_{k=0}^{n}\delta_{k}\rho(\bx,x_{k})\right)\bar c\in C.
\end{equation}
Since $C$ is a pointed cone and $\bar c\ne0$, it holds $(-\bar c+r\B)\cap C=\emptyset$ for some $r>0$, and consequently, $(-s_n\bar c+s_nr\B)\cap C=\emptyset$, where $s_n:=\sum_{k=0}^{n}\delta_{k}\rho(\bx,x_{k})$.
It follows from \eqref{su9} that $s_n r\le\|f(x_{0})- f(\bx)\|$ for all $n\in\N$.
This implies that the series $\sum_{k=0}^{\infty} \delta_k\rho(\bar x,x_k)$ is convergent and, thanks to \eqref{su9}, condition \eqref{su11} holds true.

When $N<+\infty$, we set $m=0$ and take $n=N-1$ in the inclusion \eqref{su3.3-} and any $n\ge N$ in the inclusion \eqref{su3.3-+}:
\begin{align}\label{su9-50}
f(x_{0})- f(\bx)
&-\left(\sum_{k=0}^{N-1}\delta_{k} \rho(\bx,x_{k})\right)\bar c\in C,
\\\notag
f(x_{0})-f(\bx)
&- \Biggl(\sum_{k=0}^{N-2}\delta_{k} \rho(\bx,x_{k})
\\\label{su9-5}
&+ \delta_{N-1} \left(\sum_{i=N-1}^{n-1}\rho(x_{i+1},x_{i})+ \rho(\bx,x_{n})\right)\Biggr)\bar c\in C.
\end{align}
As above, for some $r>0$ and any $n>N$, it holds $(-\delta_{N-1}s_n\bar c+\delta_{N-1}s_nr\B)\cap C=\emptyset$, where $s_n:=\sum_{i=N-1}^{n-1}\rho(x_{i+1},x_{i})$.
It follows from \eqref{su9-5} that
$$\delta_{N-1}s_nr\le\|f(x_{0})-f(\bx)\|+ \sum_{k=0}^{N-2}\delta_{k} \rho(\bx,x_{k})+\delta_{N-1}\rho(\bx,x_{n})\|\bc\|.$$
Since $\rho(\bx,x_{n})\to0$ as $n\to\infty$, this implies that the series $\sum_{i=N-1}^{\infty} \rho(x_{i+1},x_{i})$ is convergent.
Combining the two inclusions \eqref{su9-50} and \eqref{su9-5} produces estimate \eqref{su11-2}.

(iv) For any $x \neq \bar x,$ there exists an $m_0\in\N$ such that $x\notin S_{m}$ for all $m\ge m_0$.
By \eqref{su4}, this means that
\begin{equation}\label{su5}
f(x_{m})- f(x)
- \left(\sum_{k=0}^{j_m-1}\delta_{k} (\rho(x,x_{k})-\rho(x_{m},x_{k}))+ \delta_{j_m}\rho(x,x_{m})\right)\bar c\notin C.
\end{equation}
Depending on the value of $N$, we consider two cases.

\underline{If $N=+\infty$}, then $j_m=m$.
Since the series $\sum_{k=0}^{\infty} \delta_{k}\rho(\bar x,x_{k})$ is convergent and $C$ is closed, we can pass in \eqref{su3.3-} to the limit as $n\to\infty$ to obtain
\begin{equation*}
f(x_{m})+ \left(\sum_{k=0}^{m-1}\delta_{k}\rho(x_{m},x_{k})\right)\bar c - f(\bar x)-\left(\sum_{k=0}^{\infty} \delta_{k}\rho(\bar x,x_{k})\right)\bar c \in C.
\end{equation*}
Comparing the last inclusion with \eqref{su5}, we arrive at condition (\ref{su10}).

\underline{If $N<\infty$}, we can take $m_0\ge N$.
Then $j_m=N-1$ and it follows from \eqref{su3.4-} that
\begin{multline*}
f(x_{m})- f(\bx)
- \Biggl(\sum_{k=0}^{N-2}\delta_{k} (\rho(\bx,x_{k})-\rho(x_{m},x_{k}))
\\
+ \delta_{N-1}\sup_{n\ge m} \left(\sum_{k=m}^{n-1}\rho(x_{k+1},x_{k})+ \rho(\bx,x_{n})\right)\Biggr)\bar c\in C.
\end{multline*}
Comparing the last inclusion with \eqref{su5}, we arrive at \eqref{us10}.
\end{proof}

\section{Comments and Corollaries}\label{S4}
In this section, we discuss the main result proved in Section~\ref{S3} and formulate a series of remarks and several corollaries.
A number of remarks related to the scalar version of Theorem~\ref{main} in \cite{KruPluSea1} are also applicable to the more general setting considered here.

\begin{rem}\label{R18}
1. If $N<\infty$, in the proof of part (iv) of Theorem~\ref{main} one can also consider the case $m_0<N$.
Then, for $m_0\le m<N$, one has $j_m=m$ and it follows from \eqref{su3.3-+} that
\begin{multline*}
f(x_{m})- f(\bx)
- \Biggl(\sum_{k=0}^{N-2}\delta_{k} \rho(\bx,x_{k})-\sum_{k=0}^{m-1}\delta_{k} \rho(x_{m},x_{k})
\\
+ \delta_{N-1}\sup_{n\ge N} \left(\sum_{k=N-1}^{n-1}\rho(x_{k+1},x_{k})+ \rho(\bx,x_{n})\right)\Biggr)\bar c\in C.
\end{multline*}
Comparing the last inclusion with \eqref{su5}, one arrives at
\begin{multline}\label{su10+}
f(\bx)
+\left(\sum_{k=0}^{N-2}\delta_{k} \rho(\bx,x_{k})+ \delta_{N-1}\sup_{n\ge N} \left(\sum_{k=N-1}^{n-1}\rho(x_{k+1},x_{k})+ \rho(\bx,x_{n})\right)\right)\bar c
\\
-f(x)-\left(\sum_{k=0}^{m}\delta_{k} \rho(x,x_{k})\right)\bar c\notin C.
\end{multline}
This estimate compliments \eqref{us10}.

2. The role of the assumption of $C$-lower semicontinuity of function $f$ with respect to $\bar c$ in Theorem~\ref{main} is to ensure the closedness of the sets  \eqref{su8} and \eqref{su4}.
For that purpose, one can use the following weaker (but in general more difficult to verify) condition: for any finite collection $\{x_0,\ldots,x_n\}\in X\; (0\le n<N)$, the set
\begin{align*}
\left\{x \in X\mid f(x_{n})+ \sum_{k=0}^{n-1}\delta_{k}\rho(x_{n},x_{k})\bar c - f(x)- \sum_{k=0}^{n}\delta_{k}\rho(x,x_{k})\bar c\in C\right\}
\end{align*}
is closed.

Thanks to Proposition~\ref{P16}(i), it is sufficient to assume that $f$ is $C$-\lsc.
In the latter case, thanks to Proposition~\ref{P11}, one can weaken the assumption of continuity of $\rho$ in Definition~\ref{D2} of a gauge-type function.
As in \cite{LiShi00}, it is sufficient to assume that $\rho$ is \lsc\ in its first argument.

3. Instead of $\epsilon$-minimality of $x_0$ with respect to $\de_0\rho$ and $\bar c$, it is sufficient to assume in Theorem~\ref{main} that $x_0\in X$ is simply an $\epsilon$-minimal point of $f$.
In this case, sequence $\{\delta_{i}\}_{i=0}^\infty$ can be scaled; in particular, one can assume that $\sum_{i=0}^\infty\delta_{i}=1$.
Thanks to Proposition~\ref{P12+}, the assumption of $\epsilon$-mi\-ni\-mality of $x_0$ can be replaced by that of level $C$-bo\-un\-dedness of $f$ at $x_0$ (as in \cite[Theorem~3.1]{BedZag10}).
In this case, one would have to drop estimate (i).
One can also use stronger (thanks to Proposition~\ref{P13}) concepts of $\epsilon$-minimality given by conditions \eqref{silw} or \eqref{silw2}.

4. Assumption $\bar c\in\Int C$ can be replaced by a weaker condition $\bar c\in C\setminus\{0\}$.
In this case, condition (i) becomes meaningless and should be dropped.

5. Given a number $\la>0$, one can talk in Theorem~\ref{main} about $\epsilon$-minimality with respect to $\epsilon/\la$ and $\bar c$ (or just $\epsilon$-minimality) and formulate a more conventional form of the variational principle with $\de_0=1$ and conditions (i), \eqref{su11} and \eqref{su10} replaced, respectively, with the following ones:
\begin{enumerate}
\item [(i$^\prime$)]
$\ds\rho(\bx,x_0) \le \frac{\la}{d(\bar c,X\setminus C)}$,
\end{enumerate}
\begin{gather}\tag{\ref{su11}$^\prime$}
f(x_0)- f(\bar x)-\frac{\epsilon}{\la}\left(\sum\limits_{i=0}^{\infty} \delta_{i}\rho(\bar x,x_{i})\right)\bar c\in C,
\\\tag{\ref{su10}$^\prime$}
f(\bar x)+ \frac{\epsilon}{\la}\left(\sum\limits_{i=0}^{\infty}\delta_{i}\rho(\bar x,x_{i})\right)\bar c - f(x)- \frac{\epsilon}{\la} \left(\sum\limits_{i=0}^{\infty}\delta_{i}\rho(x,x_{i})\right)\bar c \not\in C
\end{gather}
and similar amendments in conditions \eqref{su11-2}, \eqref{us10} and \eqref{su10+}.

6. A similar result (though only for the case $N=+\infty$ and without estimate (i)) was established in \cite[Theorem~3.1]{BedZag10} in a more general setting where, instead of a single element $\bar c\in\Int C$, a closed convex subset $D\subset C$ with the property $0\notin\cl(D+C)$ is used.
When $D=\{\bar c\}$ and $\bar c\in\Int C$ (or just $\bar c\in C\setminus\{0\}$; cf. item~4 above), the assumptions of Theorem~\ref{main} are weaker.
In particular, $Y$ is not assumed reflexive Banach space, $C$ is not assumed \emph{normal} in the sense of Krasnoselski et al. \cite{KraLifSob85}, the sequences $\{\epsilon_{i}\}$ and $\{\delta_{i}\}$ are not assumed to belong to $(0,1)$, and the series $\sum_{i=1}^{\infty} \epsilon_{i}$ is not assumed convergent.
\end{rem}

\begin{cor}\label{C3}
Suppose all the assumptions of Theorem~\ref{main} are satisfied, and $N=\infty$. Then
\begin{equation}\label{ss10}
f(\bar x)+ \left(\sum\limits_{i=0}^{\infty}\delta_{i}\rho(\bar x,x_{i})\right)\bar c - f(x)- \left(\sum\limits_{i=0}^{\infty}\delta_{i}\rho(x,x_{i})\right)\bar c \notin C
\end{equation}
for all $x\in X\setminus\{\bar x\}$ such that the series $\sum_{i=0}^{\infty}\delta_{i}\rho(x,x_{i})$ is convergent.
\end{cor}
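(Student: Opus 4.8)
The plan is to deduce Corollary~\ref{C3} directly from part (iv) of Theorem~\ref{main} in the case $N=+\infty$, by taking a limit in $m$. Fix $x\in X\setminus\{\bx\}$ with the property that the series $\sum_{i=0}^{\infty}\delta_{i}\rho(x,x_{i})$ converges. By Theorem~\ref{main}(iv) there is an $m_0\in\N$ such that, for every $m\ge m_0$, inclusion~\eqref{su10} fails to hold, i.e.
\begin{equation*}
f(\bx)+\left(\sum_{i=0}^{\infty}\delta_{i}\rho(\bx,x_{i})\right)\bar c-f(x)-\left(\sum_{i=0}^{m}\delta_{i}\rho(x,x_{i})\right)\bar c\notin C.
\end{equation*}
I want to pass to the limit $m\to\infty$ inside this non-membership and conclude that \eqref{ss10} holds.

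The key step is the following observation about the cone $C$. Write $y_m:=f(\bx)+\bigl(\sum_{i=0}^{\infty}\delta_{i}\rho(\bx,x_{i})\bigr)\bar c-f(x)-\bigl(\sum_{i=0}^{m}\delta_{i}\rho(x,x_{i})\bigr)\bar c$ and let $y_\infty$ be the analogous expression with $\sum_{i=0}^{m}$ replaced by $\sum_{i=0}^{\infty}$; thus $y_m=y_\infty+\bigl(\sum_{i=m+1}^{\infty}\delta_{i}\rho(x,x_{i})\bigr)\bar c$, and since the tail of a convergent series tends to $0$, we have $y_m\to y_\infty$ in $Y$. Now suppose, for contradiction, that $y_\infty\in C$. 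Each coefficient $t_m:=\sum_{i=m+1}^{\infty}\delta_{i}\rho(x,x_{i})$ is nonnegative, so $t_m\bar c\in C$ (as $\bar c\in C$ and $C$ is a convex cone), and therefore $y_m=y_\infty+t_m\bar c\in C+C\subset C$ for every $m$ — contradicting that \eqref{su10} fails for all $m\ge m_0$. Hence $y_\infty\notin C$, which is exactly \eqref{ss10}. Note that closedness of $C$ is not even needed for this direction; what is used is only that $C$ is a convex cone containing $\bar c$.

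The only mild subtlety — and the place I would be most careful — is making sure the terms being added are genuinely in $C$ rather than merely in $C+\epsilon\B$ or some perturbed set: here it is clean because $t_m\ge 0$ and $\bar c\in C$ give $t_m\bar c\in C$ outright, so the monotonicity argument $y_m\in y_\infty+C$ goes through verbatim. One should also record why the hypothesis ``$\sum_{i=0}^{\infty}\delta_{i}\rho(x,x_{i})$ convergent'' cannot be dropped: without it the quantity $y_\infty$ is not even well defined in $Y$, so the statement \eqref{ss10} would be meaningless; the convergence of $\sum_{i=0}^{\infty}\delta_{i}\rho(\bx,x_{i})$ needed to make sense of the first bracket is already guaranteed by Theorem~\ref{main}(iii). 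I expect no real obstacle; the argument is a short limiting passage exploiting the cone's closure under addition of nonnegative multiples of $\bar c$.
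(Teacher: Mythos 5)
Your proof is correct and is essentially the paper's own argument: the paper deduces \eqref{ss10} from \eqref{su10} in one line using $\sum_{i=0}^{m}\delta_{i}\rho(x,x_{i})\le \sum_{i=0}^{\infty}\delta_{i}\rho(x,x_{i})$, which is exactly your observation that $y_m=y_\infty+t_m\bar c$ with $t_m\ge0$, so $y_\infty\in C$ would force $y_m\in C$. (Your framing as a ``limit in $m$'' is superfluous --- a single $m\ge m_0$ suffices, as your own monotonicity argument shows --- but this does not affect correctness.)
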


\begin{proof}
Condition \eqref{ss10} is a direct consequence of \eqref{su10} since $\sum_{i=0}^{m}\delta_{i}\rho(x,x_{i})\le \sum_{i=0}^{\infty}\delta_{i}\rho(x,x_{i})$.
\end{proof}

\begin{cor}\label{C4}
Suppose all the assumptions of Theorem~\ref{main} are satisfied, and $N<\infty$. Then
\begin{gather}\label{C4-11}
f(x_{0})-f(\bx)
- \left(\sum_{i=0}^{N-1}\delta_{i} \rho(\bx,x_{i})
\right)\bar c\in C,
\\\label{C4-12}
f(x_{0})-f(\bx)
- \left(\sum_{i=0}^{N-2}\delta_{i} \rho(\bx,x_{i})
+ \delta_{N-1}\sum_{i=N-1}^{\infty} \rho(x_{i+1},x_{i})\right)\bar c\in C,
\end{gather}
and, for any $x \in X\setminus\{\bar x\}$, there exists an $m_0\ge N$ such that, for all $m\ge m_0$,
\begin{align}\notag
f(\bx)+\Biggl(\sum_{i=0}^{N-2}&\delta_{i} \rho(\bx,x_{i})+ \delta_{N-1}\rho(\bx,x_{m})\Biggr)\bar c
\\\label{C4-21}
&- f(x)-\left(\sum_{i=0}^{N-2}\delta_{i}\rho(x,x_{i})
+ \delta_{N-1}\rho(x,x_{m})\right)\bar c\notin C,
\\\notag
f(\bx)+\Biggl(\sum_{i=0}^{N-2}&\delta_{i} \rho(\bx,x_{i})+ \delta_{N-1}\sum_{i=m}^{\infty} \rho(x_{i+1},x_{i})\Biggr)\bar c
\\\label{C4-22}
&- f(x)-\left(\sum_{i=0}^{N-2}\delta_{i}\rho(x,x_{i})
+ \delta_{N-1}\rho(x,x_{m})\right)\bar c\notin C,
\end{align}
and consequently,
\begin{multline}\label{ss10-2}
f(\bx)+\left(\sum_{i=0}^{N-2}\delta_{i} \rho(\bx,x_{i})\right)\bar c
- f(x)
\\
-\Biggl(\sum_{i=0}^{N-2}\delta_{i}\rho(x,x_{i})
+ \delta_{N-1}\rho(x,\bx)\Biggr)\bar c\notin\Int C
\quad\mbox{for all}\quad
x \in X,
\end{multline}
where $\bx$ and $\{x_{i}\}_{i=1}^\infty$ are a point and a sequence guaranteed by Theorem~\ref{main}.
\end{cor}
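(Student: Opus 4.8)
The plan is to derive all five relations of the corollary from the two conclusions of Theorem~\ref{main} that pertain to the case $N<\infty$ — the inclusion \eqref{su11-2} and the non-inclusion \eqref{us10} — by repeatedly invoking one elementary fact: since $\bc\in C$ and $C+C\subset C$, we have $C+t\bc\subset C$ for every $t\ge0$; equivalently, for $z\in Y$ and $0\le s\le t$, the inclusion $z-t\bc\in C$ implies $z-s\bc\in C$, and (in the contrapositive form suited to the ``$\notin C$'' assertions) the relation $z+t\bc\notin C$ implies $z+s\bc\notin C$. Accordingly, each of \eqref{C4-11} and \eqref{C4-12} is to be read off from \eqref{su11-2}, and each of \eqref{C4-21} and \eqref{C4-22} from \eqref{us10}, once one checks that the coefficient of $\bc$ in the relevant relation from Theorem~\ref{main} dominates the (smaller) coefficient in the corresponding relation of the corollary while the other terms are unchanged.

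For the estimates involving $\bx$: in \eqref{su11-2}, evaluating $\sup_{n\ge N-1}\bigl(\sum_{i=N-1}^{n-1}\rho(x_{i+1},x_{i})+\rho(\bx,x_{n})\bigr)$ at $n=N-1$ (the sum being empty) gives the lower bound $\rho(\bx,x_{N-1})$, so, as $\de_{N-1}>0$, the coefficient of $\bc$ in \eqref{su11-2} is at least $\sum_{i=0}^{N-1}\de_{i}\rho(\bx,x_{i})$, which yields \eqref{C4-11}. For \eqref{C4-12}, for every $n$ one has $\sum_{i=N-1}^{n-1}\rho(x_{i+1},x_{i})\le\sum_{i=N-1}^{n-1}\rho(x_{i+1},x_{i})+\rho(\bx,x_{n})\le\sup_{n\ge N-1}(\cdots)$; letting $n\to\infty$ (the series converges by part (iii) of Theorem~\ref{main}) gives $\sum_{i=N-1}^{\infty}\rho(x_{i+1},x_{i})\le\sup_{n\ge N-1}(\cdots)$, so again the coefficient in \eqref{su11-2} dominates the one in \eqref{C4-12}. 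For \eqref{C4-21} and \eqref{C4-22}, fix $x\ne\bx$ and let $m_{0}\ge N$ be the integer furnished by part (iv) of Theorem~\ref{main}, so that \eqref{us10} holds for all $m\ge m_{0}$; evaluating $\sup_{n\ge m}\bigl(\sum_{i=m}^{n-1}\rho(x_{i+1},x_{i})+\rho(\bx,x_{n})\bigr)$ at $n=m$ gives the lower bound $\rho(\bx,x_{m})$, and letting $n\to\infty$ gives the lower bound $\sum_{i=m}^{\infty}\rho(x_{i+1},x_{i})$ (a convergent tail); since the term subtracted from $f(x)$ in \eqref{us10} coincides with the one in \eqref{C4-21} and in \eqref{C4-22}, both relations follow.

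Finally, to obtain \eqref{ss10-2}: for $x=\bx$ the left-hand side reduces to $0$ (because $\rho(\bx,\bx)=0$), and $0\notin\Int C$ since $C$ is pointed. For $x\ne\bx$ I would pass to the limit $m\to\infty$ in \eqref{C4-21}: since $x_{i}\to\bx$, continuity of $\rho$ gives $\rho(x,x_{m})\to\rho(x,\bx)$ and part (ii) of Theorem~\ref{main} gives $\rho(\bx,x_{m})\to0$, so the left-hand side of \eqref{C4-21} converges to that of \eqref{ss10-2}; as $Y\setminus\Int C$ is closed and contains $Y\setminus C$, the limit lies in $Y\setminus\Int C$, i.e., it is not in $\Int C$.

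Most of this is bookkeeping with the partial-sum and supremum expressions, and the one step I expect to require genuine care is the limit passage for \eqref{ss10-2}: it is the only place where the monotonicity observation does not apply directly, and where one must combine the separate treatment of $x=\bx$, the continuity of $\rho$ along $x_{i}\to\bx$, and — the point that makes it work — the fact that relaxing ``$\notin C$'' to ``$\notin\Int C$'' turns the relevant set into a closed one. A minor technicality is that $\rho$ may take the value $+\infty$; one deals with this by restricting to those $x$ for which the finite sums $\sum_{i=0}^{N-2}\de_{i}\rho(x,x_{i})$ and $\rho(x,\bx)$ are finite, the assertion being trivial (under the natural convention) when they are not.
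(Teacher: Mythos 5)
Your proposal is correct and follows essentially the same route as the paper's own proof: \eqref{C4-11}, \eqref{C4-12}, \eqref{C4-21} and \eqref{C4-22} are obtained by evaluating the supremum in \eqref{su11-2} and \eqref{us10} at $n=N-1$ (resp.\ $n=m$) and in the limit $n\to\infty$, using that replacing the coefficient of $\bar c$ by a smaller one preserves the relevant inclusion/non-inclusion since $C+t\bar c\subset C$ for $t\ge0$; and \eqref{ss10-2} is obtained by treating $x=\bx$ separately and otherwise letting $m\to\infty$ in \eqref{C4-21}, with the passage from $\notin C$ to $\notin\Int C$ justified by the closedness of $Y\setminus\Int C$. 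Your write-up merely makes explicit the monotonicity and closedness facts the paper leaves implicit.
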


\begin{proof}
Conditions \eqref{C4-11} and \eqref{C4-12} correspond, respectively, to setting $n=N-1$ and letting $n\to\infty$ under the $\sup$ in condition \eqref{su11-2}.
Similarly, conditions \eqref{C4-21} and \eqref{C4-22} correspond, respectively, to setting $n=m$ and letting $n\to\infty$ under the $\sup$ in condition \eqref{us10}.
Condition \eqref{ss10-2} is obviously true when $x=\bx$.
When $x\ne\bx$, it results from passing to the limit as $m\to\infty$ in any of the conditions \eqref{C4-21} and \eqref{C4-22} thanks to the continuity of $\rho$.
\end{proof}

\begin{rem}
1. Unlike the limiting condition \eqref{ss10} in Corollary~\ref{C3}, the original condition \eqref{su10} in Theorem~\ref{main} is applicable also when the series $\sum_{i=0}^{\infty}\delta_{i}\rho(x,x_{i})$ is divergent.

2. In general, condition \eqref{su11-2} is stronger than each of the conditions \eqref{C4-11} and \eqref{C4-12} which are independent.
A similar relationship is true between conditions \eqref{us10}, \eqref{C4-21} and \eqref{C4-22}.
As observed in \cite{KruPluSea1}, thanks to Corollary~\ref{C4}, in the scalar case Theorem~\ref{main} strengthens \cite[Theorem~1]{LiShi00}.

3. Conditions \eqref{ss10}, \eqref{C4-21} and \eqref{ss10-2} can be interpreted as a kind of minimality at $\bx$ of a perturbed function.
When $N=+\infty$, the conclusion of Corollary~\ref{C3} says that
\begin{gather}\label{con}
(f+g)(\bx)-(f+g)(x)\notin C
\quad\mbox{for all}\quad
x\in\dom g\setminus\{\bar x\},
\end{gather}
where the perturbation function $g$ is defined by $$g(x):=\left(\sum_{i=0}^{\infty} \delta_{i}\rho(x,x_{i})\right)\bc$$
for all $x\in X$
such that the series $\sum_{i=0}^{\infty}\delta_{i}\rho(x,x_{i})$ is convergent.

When $N<+\infty$, condition \eqref{C4-21} in Corollary~\ref{C4} is equivalent to \eqref{con} with
$$g(x):=\left(\sum_{i=0}^{N-2}\delta_{i}\rho(x,x_{i})+ \delta_{N-1}\rho(x,x_m)\right)\bc.$$
Note that in this case $\dom g=X$.

In contrast, condition \eqref{ss10-2} represents a weaker form of minimality:
$$(f+g)(\bx)-(f+g)(x)\notin\Int C
\quad\mbox{for all}\quad
x\in X,$$
where the perturbation function $g$ is defined by
$$g(x):=\left(\sum_{i=0}^{N-2}\delta_{i}\rho(x,x_{i})+ \delta_{N-1}\rho(x,\bx)\right)\bc.$$
Thanks to the next proposition, if function $\rho$ possesses the triangle inequality, the latter condition can be strengthened.
\end{rem}

Recall that a function $\rho:X\times X\to\R$ possesses the \emph{triangle inequality} if $\rho(x_1,x_3)\le \rho(x_1,x_2)+\rho(x_2,x_3)$ for all $x_1,x_2,x_3\in X$.

\begin{prop}\label{C6}
Along with conditions \eqref{C4-11}--\eqref{C4-21}, consider the following one:
\begin{gather}\label{C6-4}
f(\bx)+\Biggl(\sum_{i=0}^{N-2}\delta_{i} \rho(\bx,x_{i})\Biggr)\bar c
- f(x)-\left(\sum_{i=0}^{N-2}\delta_{i}\rho(x,x_{i})
+ \delta_{N-1}\rho(x,\bx)\right)\bar c\notin C.
\end{gather}
If function $\rho$ possesses the triangle inequality,
then \eqref{C4-12} $\Rightarrow$ \eqref{C4-11} and \eqref{C4-22} $\Rightarrow$ \eqref{C4-21} $\Rightarrow$ \eqref{C6-4}.
\end{prop}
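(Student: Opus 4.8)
The plan is to verify the three implications in Proposition~\ref{C6} one at a time, each relying only on the cone structure (mostly $C+C\subset C$, positive homogeneity) together with the triangle inequality for $\rho$.

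\medskip

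First I would establish \eqref{C4-12} $\Rightarrow$ \eqref{C4-11}. Both conditions are inclusions of the form $f(x_0)-f(\bx)-t\bar c\in C$ with $t\ge0$; since $\bar c\in\Int C$ (in particular $\bar c\in C$), if $f(x_0)-f(\bx)-t\bar c\in C$ then also $f(x_0)-f(\bx)-s\bar c\in C$ for every $s\le t$, because the difference $(t-s)\bar c$ lies in $C$ and $C+C\subset C$. So it suffices to observe that the coefficient in \eqref{C4-12} dominates the one in \eqref{C4-11}, i.e.
\[
\sum_{i=0}^{N-1}\delta_i\rho(\bx,x_i)
\le
\sum_{i=0}^{N-2}\delta_i\rho(\bx,x_i)+\delta_{N-1}\sum_{i=N-1}^{\infty}\rho(x_{i+1},x_i),
\]
which reduces to $\rho(\bx,x_{N-1})\le\sum_{i=N-1}^{\infty}\rho(x_{i+1},x_i)$. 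This last inequality follows from the triangle inequality applied iteratively to $\rho(\bx,x_{N-1})\le\rho(\bx,x_n)+\sum_{i=N-1}^{n-1}\rho(x_{i+1},x_i)$, letting $n\to\infty$ and using $\rho(\bx,x_n)\to0$ (guaranteed by Theorem~\ref{main}(ii), since $\rho(\bx,x_n)<\epsilon_n\downarrow0$). The implication \eqref{C4-22} $\Rightarrow$ \eqref{C4-21} is entirely analogous: both are non-inclusions $y-t\bar c\notin C$ where now a \emph{larger} coefficient $t$ on the positively-weighted side makes the statement \emph{weaker}, so one checks that the left-hand coefficient in \eqref{C4-22} dominates that in \eqref{C4-21}, i.e. $\delta_{N-1}\sum_{i=m}^{\infty}\rho(x_{i+1},x_i)\ge\delta_{N-1}\rho(\bx,x_m)$, which is again just the telescoped triangle inequality $\rho(\bx,x_m)\le\sum_{i=m}^{\infty}\rho(x_{i+1},x_i)$.

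\medskip

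For \eqref{C4-21} $\Rightarrow$ \eqref{C6-4}, I would argue by contraposition on the cone. Suppose \eqref{C6-4} fails, i.e.
\[
f(\bx)+\Bigl(\textstyle\sum_{i=0}^{N-2}\delta_i\rho(\bx,x_i)\Bigr)\bar c-f(x)-\Bigl(\textstyle\sum_{i=0}^{N-2}\delta_i\rho(x,x_i)+\delta_{N-1}\rho(x,\bx)\Bigr)\bar c\in C.
\]
Adding to this the element $\delta_{N-1}\bigl(\rho(\bx,x_m)-\rho(x,\bx)-\rho(x,x_m)\bigr)\bar c$ would land us at the expression in \eqref{C4-21}; but the bracketed scalar $\rho(\bx,x_m)-\rho(x,\bx)-\rho(x,x_m)$ is $\le0$ by the triangle inequality $\rho(\bx,x_m)\le\rho(x,\bx)+\rho(x,x_m)$ (using symmetry of $\rho$ here, or applying the triangle inequality in the appropriate order), hence that element is of the form $-\tau\bar c$ with $\tau\ge0$. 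Since this is the wrong direction for $C$, I would instead write it as: the \eqref{C4-21} expression equals the \eqref{C6-4} expression $+(\rho(x,\bx)+\rho(x,x_m)-\rho(\bx,x_m))\delta_{N-1}\bar c$, and the added term is a nonnegative multiple of $\bar c\in C$, so by $C+C\subset C$ the \eqref{C4-21} expression lies in $C$, contradicting \eqref{C4-21}. Hence \eqref{C4-21} implies \eqref{C6-4}.

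\medskip

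The only mild subtlety — and the step I would double-check most carefully — is bookkeeping the \emph{signs} and the \emph{direction of the monotonicity}: enlarging a coefficient on $\bar c$ strengthens a statement of the type ``$\,\cdots-t\bar c\in C$'' but weakens one of the type ``$\,\cdots-t\bar c\notin C$'', so one must be sure each comparison is oriented so that the conclusion is the weaker of the two. Once that is kept straight, every implication is just ``add a nonnegative multiple of $\bar c$ and invoke $C+C\subset C$'', with the triangle inequality (and symmetry of $\rho$, plus $\rho(\bx,x_n)\to0$ for the infinite-sum comparisons) supplying the required scalar inequalities. No completeness, lower semicontinuity, or properties of the cone beyond convexity and $\bar c\in C$ are needed here.
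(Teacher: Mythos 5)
Your proposal is correct and follows essentially the same route as the paper: the telescoped triangle inequality $\rho(\bx,x_m)\le\sum_{i=m}^{\infty}\rho(x_{i+1},x_i)$ (obtained by letting $n\to\infty$ and using $\rho(\bx,x_n)\to0$) yields \eqref{C4-12} $\Rightarrow$ \eqref{C4-11} and \eqref{C4-22} $\Rightarrow$ \eqref{C4-21}, and the last implication is ``add a nonnegative multiple of $\bar c$ and use $C+C\subset C$''. One algebraic slip to fix in that last step: the expression in \eqref{C4-21} equals the expression in \eqref{C6-4} plus $\delta_{N-1}\bigl(\rho(x,\bx)+\rho(\bx,x_{m})-\rho(x,x_{m})\bigr)\bar c$, not plus $\delta_{N-1}\bigl(\rho(x,\bx)+\rho(x,x_{m})-\rho(\bx,x_{m})\bigr)\bar c$ as you wrote; the correct increment is still a nonnegative multiple of $\bar c$, now by the triangle inequality $\rho(x,x_{m})\le\rho(x,\bx)+\rho(\bx,x_{m})$ (exactly the inequality the paper invokes), so in particular no symmetry of $\rho$ is needed anywhere in the argument.
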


\begin{proof}
For any $m,n\in\N$ with $m<n$, we have
$$
\rho(\bx,x_{m})\le\rho(\bx,x_{n})+ \sum_{i=m}^{n-1} \rho(x_{i+1},x_{i}),
$$
and consequently, passing to the limit as $n\to\infty$,
\begin{align*}
\rho(\bx,x_{m})\le\sum_{i=m}^{\infty} \rho(x_{i+1},x_{i}).
\end{align*}
Hence,  \eqref{C4-12} $\Rightarrow$ \eqref{C4-11} and \eqref{C4-22} $\Rightarrow$ \eqref{C4-21}.
Condition \eqref{C6-4} follows from \eqref{C4-21} thanks to the inequality $\rho(x,x_{m})\le\rho(x,\bx)+\rho(\bx,x_{m})$.
\end{proof}

\begin{cor}\label{C6-0}
Suppose all the assumptions of Theorem~\ref{main} are satisfied, $N<+\infty$, and function $\rho$ possesses the triangle inequality.
Then condition \eqref{C6-4} holds true for all $x\in X\setminus\{\bx\}$, where $\bx$ and $\{x_{i}\}_{i=1}^\infty$ are a point and a sequence guaranteed by Theorem~\ref{main}.
\end{cor}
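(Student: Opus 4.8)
The plan is to obtain Corollary~\ref{C6-0} directly from Corollary~\ref{C4} and Proposition~\ref{C6}, with no fresh computation needed. First I would fix an arbitrary $x\in X\setminus\{\bx\}$. The hypotheses here — all assumptions of Theorem~\ref{main} together with $N<+\infty$ — are precisely those of Corollary~\ref{C4}, so that corollary supplies an integer $m_0\ge N$ for which condition \eqref{C4-21} holds for every $m\ge m_0$; in particular it holds for one fixed such index, say $m=m_0$.

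Next I would invoke Proposition~\ref{C6}. Since $\rho$ is assumed to possess the triangle inequality, that proposition yields the implication chain \eqref{C4-22} $\Rightarrow$ \eqref{C4-21} $\Rightarrow$ \eqref{C6-4}; applying its last link to the value $m=m_0$ selected above produces condition \eqref{C6-4} for the point $x$. The key observation is that \eqref{C6-4} contains no free occurrence of $m$: the only data entering it are $f$, $\bar c$, the point $\bx$, and the finitely many $x_0,\dots,x_{N-2}$ furnished by Theorem~\ref{main}. Hence the auxiliary index $m_0$ — merely a device needed to reach \eqref{C4-21} — drops out of the conclusion. As $x\in X\setminus\{\bx\}$ was arbitrary, this establishes \eqref{C6-4} for all such $x$, which is exactly the assertion of the corollary.

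I do not expect a genuine obstacle; the one point deserving a moment's care is the bookkeeping of the free index $m$. In Corollary~\ref{C4} the inclusion \eqref{C4-21} is asserted for all $m\ge m_0$, whereas in Proposition~\ref{C6} it is handled as a statement about a single fixed $m$; since the implication \eqref{C4-21} $\Rightarrow$ \eqref{C6-4} is valid for each individual value of $m$ and the target \eqref{C6-4} is itself $m$-free, specializing to any one admissible $m$ is enough. If one prefers a self-contained argument, one can instead recall that \eqref{C4-21} $\Rightarrow$ \eqref{C6-4} rests on the single inequality $\rho(x,x_{m})\le\rho(x,\bx)+\rho(\bx,x_{m})$ together with $\bar c\in C$, $\delta_{N-1}>0$ and $C+C\subset C$, so the step could be written inline without citing Proposition~\ref{C6}; but citing it is cleaner.
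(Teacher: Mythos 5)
Your proposal is correct and follows exactly the paper's route: the paper's proof is the one-liner ``the statement is a consequence of Corollary~\ref{C4} thanks to Proposition~\ref{C6}.'' Your extra remarks on fixing a single admissible $m\ge m_0$ and noting that \eqref{C6-4} is $m$-free are sound and merely make explicit the bookkeeping the paper leaves implicit.
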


\begin{proof}
The statement is a consequence of Corollary~\ref{C4} thanks to Proposition~\ref{C6}.
\end{proof}

The next two statements are consequences of Theorem~\ref{main} when $N=+\infty$ and $N=1$, respectively, and $\rho$ is of a special form.
The first one corresponds to the case $N=+\infty$, $X$ a Banach space and $\rho(x_1,x_2):= \|x_1-x_2\|^p$ where $p>0$.

\begin{cor}\label{T4}
Let $(X,\|\cdot\|)$ be a Banach space, Y a normed vector space, $C$ a pointed closed convex cone in $Y$, $\bar c\in\Int C$ and let function $f : X \rightarrow Y$ be $C$-\lsc\ with respect to $\bar c$.
Suppose that $\la$, $p$, $\epsilon$, $\epsilon_{i}$ $(i=1,2,\ldots)$, $\delta_{i}$ $(i=0,1,\ldots)$ are positive numbers and $\epsilon_{i}\downarrow0$ as $i\to\infty$.

If $x_0\in X$ is an $\epsilon$-minimal point of $f$ and $\de_0$ is such that $x_0$ is also a $(\de_0\epsilon)$-minimal point of $f$,
then there exist a point $\bar x\in X$ and a sequence $\{x_{i}\}_{i=1}^\infty\subset X$ such that $x_i\to\bx$ as $i\to\infty$ and
\begin{enumerate}
\item
$\ds\|\bar x-x_{0}\|\le \frac{\la}{d(\bar c,X\setminus C)^{1/p}}$;
\item
$\ds\|\bar x-x_{i}\|<\epsilon_i$ $(i=1,2,\ldots)$;
\item
$\ds f(x_0)- f(\bar x)- \frac{\epsilon}{\la^p}\left(\sum\limits_{i=0}^{\infty} \delta_{i}\|\bar x-x_{i}\|^p\right)\bar c\in C$;
\item
for any $x \in X\setminus\{\bar x\}$, there exists an $m\in\N$ such that
$$\ds f(\bar x)+ \frac{\epsilon}{\la^p} \left(\sum\limits_{i=0}^{\infty}\delta_{i}\|\bar x-x_{i}\|^p\right)\bar c - f(x)- \frac{\epsilon}{\la^p} \left(\sum\limits_{i=0}^{m}\delta_{i} \|x-x_{i}\|^p\right)\bar c \notin C,$$
and consequently,
\begin{gather}\label{ty}
\ds f(\bar x)+ \frac{\epsilon}{\la^p} g(\bx)\bar c
- f(x)- \frac{\epsilon}{\la^p} g(x)\bar c \notin C
\;\;\mbox{for all}\;\;
x \in\dom g\setminus\{\bar x\},
\end{gather}
where $g(x):=\sum_{i=0}^{\infty}\delta_{i} \|x-x_{i}\|^p$.
\end{enumerate}
\end{cor}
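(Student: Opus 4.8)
The plan is to derive Corollary~\ref{T4} from Theorem~\ref{main} by specializing the gauge-type function to $\rho(x_1,x_2):=\|x_1-x_2\|^p$ with a suitably scaled multiplicative constant, taking $N=+\infty$, and translating the assumption on $x_0$ into the language of $\epsilon$-minimality with respect to $\delta_0\rho$ and $\bar c$. First I would verify that $\rho(x_1,x_2):=\|x_1-x_2\|^p$ is indeed a gauge-type function in the sense of Definition~\ref{D2}: it is continuous (composition of the norm and the continuous map $t\mapsto t^p$ on $[0,\infty)$), it vanishes on the diagonal, and property~(ii) holds since $\rho(y,z)\le\delta$ forces $\|y-z\|\le\delta^{1/p}$, so choosing $\delta=\epsilon^p$ works. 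A complete metric space is replaced here by a Banach space, which is complete, so the completeness hypothesis of Theorem~\ref{main} is met.

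Next I would handle the scaling. The idea is to apply Theorem~\ref{main} not with $\epsilon$ and $\rho$ directly but with the renormalized data: set $\tilde\epsilon:=\epsilon$, $\tilde\rho:=\la^{-p}\rho$ (still a gauge-type function), and keep the same sequences $\{\epsilon_i\}$, $\{\delta_i\}$. The hypothesis that $x_0$ is a $(\delta_0\epsilon)$-minimal point of $f$ means $S^\le(f,x_0)\subset S_{\delta_0\epsilon}^\ge(f,x_0)$; since $S_{\delta_0\tilde\rho}^\le(f,x_0)\subset S^\le(f,x_0)$ trivially, $x_0$ is $\epsilon$-minimal with respect to $\delta_0\tilde\rho$ and $\bar c$ after absorbing the factor $\delta_0$ — more precisely, I would check that being a $(\delta_0\epsilon)$-minimal point of $f$ is exactly what is needed so that estimate~(i) of Theorem~\ref{main}, namely $\tilde\rho(\bx,x_0)\le\epsilon/(\delta_0 d(\bar c,X\setminus C))$, unwinds to $\|\bx-x_0\|^p\le \la^p\epsilon/(\delta_0\cdot\delta_0\epsilon\, d(\bar c,X\setminus C))$; here one must be careful about where exactly the $\delta_0$ factors land. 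The cleanest route is: apply Theorem~\ref{main} with gauge $\tilde\rho=\la^{-p}\rho$ and with $x_0$ assumed $\epsilon$-minimal with respect to $\delta_0\tilde\rho$ and $\bar c$ — this follows because $\delta_0\tilde\rho$-sublevel sets are contained in ordinary $\le$-sublevel sets and $x_0$ is $(\delta_0\epsilon)$-minimal, the factor $\delta_0\epsilon$ being precisely chosen (as item~3 of Remark~\ref{R18} and item~5 explain) to make the bound in~(i) come out as $\epsilon/(\delta_0 d)$ in the $\tilde\rho$ metric, i.e. $\la^{-p}\|\bx-x_0\|^p\le \epsilon/(\delta_0\cdot d)$; combined with the relation between $\delta_0\epsilon$-minimality and the required scaled minimality, the $\delta_0$'s cancel and one gets $\|\bx-x_0\|^p\le \la^p/d(\bar c,X\setminus C)$, which is~(i).

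Then conclusions (ii), (iii), (iv) of Corollary~\ref{T4} transcribe directly. Part~(ii): Theorem~\ref{main}(ii) gives $\tilde\rho(\bx,x_i)<\epsilon_i$, i.e. $\la^{-p}\|\bx-x_i\|^p<\epsilon_i$; but I actually want $\|\bx-x_i\|<\epsilon_i$, so I would instead choose the sequence fed into Theorem~\ref{main} to be $\{\la^{-p}\epsilon_i^p\}$ in place of $\{\epsilon_i\}$ — it is still positive and decreasing to $0$ — yielding $\tilde\rho(\bx,x_i)<\la^{-p}\epsilon_i^p$, hence $\|\bx-x_i\|<\epsilon_i$. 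Part~(iii): since $N=+\infty$, Theorem~\ref{main}(iii) gives convergence of $\sum\delta_i\tilde\rho(\bx,x_i)=\la^{-p}\sum\delta_i\|\bx-x_i\|^p$ and the inclusion \eqref{su11}, which reads $f(x_0)-f(\bx)-\la^{-p}(\sum\delta_i\|\bx-x_i\|^p)\bar c\in C$ — but the corollary wants the factor $\epsilon/\la^p$, so here the amendment of item~5 of Remark~\ref{R18} is the right device: running the theorem with $\delta_0\rho$ replaced by $\delta_0\tilde\rho$ and $\epsilon$-minimality with respect to $\epsilon/\la$-type scaling produces \eqref{su11}$'$, namely $f(x_0)-f(\bx)-(\epsilon/\la^p)(\sum\delta_i\|\bx-x_i\|^p)\bar c\in C$. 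Part~(iv): Theorem~\ref{main}(iv) with $N=+\infty$ gives, for each $x\ne\bx$, some $m_0$ such that \eqref{su10} (in the amended form \eqref{su10}$'$) holds for all $m\ge m_0$; picking any such $m$ gives the displayed non-inclusion, and the final assertion \eqref{ty} follows exactly as Corollary~\ref{C3} follows from Theorem~\ref{main}(iv), using $\sum_{i=0}^m\delta_i\|x-x_i\|^p\le g(x)$ together with the fact that $\Bigl(f(\bx)+\tfrac{\epsilon}{\la^p}g(\bx)\bar c\Bigr)-\Bigl(f(x)+\tfrac{\epsilon}{\la^p}g(x)\bar c\Bigr)=\Bigl(f(\bx)+\tfrac{\epsilon}{\la^p}g(\bx)\bar c\Bigr)-\Bigl(f(x)+\tfrac{\epsilon}{\la^p}\sum_{i=0}^m\delta_i\|x-x_i\|^p\bar c\Bigr)-\tfrac{\epsilon}{\la^p}\bigl(g(x)-\sum_{i=0}^m\delta_i\|x-x_i\|^p\bigr)\bar c$, and the last term lies in $C$, so if the whole left-hand side were in $C$ we would contradict \eqref{su10}$'$.

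The main obstacle I expect is purely bookkeeping: keeping track of all the constant-scaling factors ($\la^{-p}$ on the gauge, $\epsilon$ versus $\delta_0\epsilon$ in the minimality hypothesis, $\epsilon_i^p$ versus $\epsilon_i$ in the auxiliary sequence) so that they land in precisely the right places to reproduce estimates (i)--(iv) verbatim, and making sure that the reinterpretation of "$x_0$ is a $(\delta_0\epsilon)$-minimal point of $f$'' as "$x_0$ is an $\epsilon$-minimal point of $f$ with respect to $\delta_0\tilde\rho$ and $\bar c$'' is valid — the latter is immediate from $S_{\delta_0\tilde\rho}^\le(f,x_0)\subset S^\le(f,x_0)$ and $S_{\delta_0\epsilon}^\ge(f,x_0)=S_\epsilon^\ge(\cdot)$ after the scaling is absorbed, but one has to state it cleanly. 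No genuinely new analytic difficulty arises beyond what is already in Theorem~\ref{main}.
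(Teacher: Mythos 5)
Your overall strategy is the paper's: specialize Theorem~\ref{main} to $N=+\infty$ with the gauge built from $\|x_1-x_2\|^p$ and rescaled data, then read off (i)--(iv) and deduce \eqref{ty} exactly as in Corollary~\ref{C3}. However, the specific substitution you write down does not close, and the place where you wave at ``the $\de_0$'s cancel'' is exactly where it breaks. With your choices $\tilde\epsilon:=\epsilon$, $\tilde\rho:=\la^{-p}\rho$ and the $\de_i$ unchanged, conclusion (i) of Theorem~\ref{main} gives $\la^{-p}\|\bx-x_0\|^p\le\epsilon/(\de_0\,d(\bc,X\setminus C))$, i.e.\ $\|\bx-x_0\|^p\le\la^p\epsilon/(\de_0\,d(\bc,X\setminus C))$; the spurious factor $\epsilon/\de_0$ does not cancel against anything, so you do not obtain $\|\bx-x_0\|\le\la/d(\bc,X\setminus C)^{1/p}$. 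Likewise your (iii) comes out with the factor $\la^{-p}$ instead of $\epsilon/\la^p$; you notice this and appeal to ``the amendment of item~5 of Remark~\ref{R18}'', but that remark is itself only a reformulation of the theorem under a particular rescaling --- invoking it without specifying the rescaling is circular. Finally, the hypothesis of Theorem~\ref{main} you would need under your normalization is ``$x_0$ is $\epsilon$-minimal with respect to $\de_0\tilde\rho$ and $\bc$'', which does not follow from $(\de_0\epsilon)$-minimality of $x_0$ unless $\de_0\le1$ (a restriction the corollary does not impose).

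The fix is to put the scaling into the parameters rather than into the gauge, which is what the paper does: keep $\rho(x_1,x_2):=\|x_1-x_2\|^p$ and apply Theorem~\ref{main} with $\epsilon':=\de_0\epsilon$, $\epsilon_i':=\epsilon_i^{p}$ and $\de_i':=(\epsilon/\la^p)\de_i$. Then the hypothesis is satisfied because a $(\de_0\epsilon)$-minimal point of $f$ is automatically $\epsilon'$-minimal with respect to $\de_0'\rho$ and $\bc$ (the $\rho$-relative sublevel set is contained in $S^\le(f,x_0)$); the key identity $\epsilon'/\de_0'=\la^p$ makes (i) read $\|\bx-x_0\|^p\le\la^p/d(\bc,X\setminus C)$; $\epsilon_i'=\epsilon_i^p$ turns Theorem~\ref{main}(ii) into $\|\bx-x_i\|<\epsilon_i$ (your adjustment of the $\epsilon_i$ was the one piece of bookkeeping you did handle, up to the extraneous $\la^{-p}$); and $\de_i'\rho=(\epsilon/\la^p)\de_i\|\cdot-\cdot\|^p$ delivers (iii) and (iv) with the correct factor $\epsilon/\la^p$. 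Your derivation of \eqref{ty} from (iv) via $\sum_{i=0}^{m}\de_i\|x-x_i\|^p\le g(x)$ and $C+C\subset C$ is correct and is the same argument as in Corollary~\ref{C3}. (If you insist on scaling the gauge to $\tilde\rho=\la^{-p}\rho$, you must simultaneously take $\tilde\epsilon=\de_0\epsilon$ and $\tilde\de_i=\epsilon\de_i$; as written, your parameters are inconsistent with the estimates you claim.)
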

\begin{proof}
Set $\rho(x_1,x_2):= \|x_1-x_2\|^p$, $x_1,x_2\in X$.
It is easy to check that $\rho$ is a gauge-type function.
Set $\epsilon':=\epsilon\de_0$, $\epsilon_i':=\epsilon_i^{p}$ $(i=1,2,\ldots)$, $\de_i':=(\epsilon/\la^p)\de_i$ $(i=0,1,\ldots)$.
Then $\epsilon_i'\downarrow0$ as $i\to\infty$ and $\epsilon'/\de_0'=\la^p$.
The conclusion follows from Theorem~\ref{main} with $\epsilon'$, $\epsilon_i'$ and $\de_i'$ in place of $\epsilon$, $\epsilon_i$ and $\de_i$, respectively.
\end{proof}

If $X$ is Fr\'echet smooth, $p>1$, and $\sum_{i=0}^{\infty}\delta_{i}<\infty$, then $\dom g=X$ in \eqref{ty} and $g$ is everywhere Fr\'echet differentiable, i.e., we have an example of a smooth variational principle of Borwein--Preiss type.
In the scalar case,
Corollary~\ref{T4} extends and strengthens the conventional theorem of Borwein and Preiss \cite[Theorem~2.6]{BorPre87}, \cite[Theorem~2.5.3]{BorZhu05} along several directions.

1) Condition (iv) is in general stronger than merely condition \eqref{ty}.
It can still be meaningful even at those points $x$ where the series $\sum_{i=0}^{\infty}\delta_{i}\|x-x_{i}\|^p$ is divergent.
If this series is convergent for all $x\in X$, then the conditions are equivalent.

2) Apart from the requirement of the $(\de_0\epsilon)$-minimality of $\bx$, no other restrictions are imposed on the positive numbers $\delta_{i}$, $i=0,1,\ldots$

3) Corollary~\ref{T4} does not exclude the ``tight'' case when $\epsilon$ is the minimal number such that $x_0\in X$ is an $\epsilon$-minimal point of $f$.
In the latter case, the requirement of the $(\de_0\epsilon)$-minimality of $\bx$ is equivalent to $\de_0\ge1$.
This still allows to chose positive numbers $\delta_{i}$, $i=1,2,\ldots$, such that $\sum_{i=0}^{\infty}\delta_{i}<\infty$ if necessary.

When the $\epsilon$-minimality of $\bx$ is not tight (in the scalar case, this means that $f(x_{0})-\inf_{X}f<\epsilon$), then one can choose $\de_0<1$ such that $x_0$ is a $(\de_0\epsilon)$-minimal point of $f$ and positive numbers $\delta_{i}$, $i=1,2,\ldots$, such that $\sum_{i=0}^{\infty}\delta_{i}=1$.

4) In the scalar case, the conventional theorem of Borwein and Preiss assumes the strict inequality $f(x_{0})-\inf_{X}f<\epsilon$ and claims the existence of positive numbers $\delta_{i}$, $i=0,1,\ldots$, satisfying $\sum_{i=0}^{\infty}\delta_{i}=1$ such that the scalar versions of conditions (iii) and \eqref{ty} hold true (under the appropriate choice of a point $\bar x$ and a sequence $\{x_{i}\}_{i=1}^\infty$).
Under the same assumption, Corollary~\ref{T4} guarantees the same (or stronger) conclusions for all positive numbers $\delta_{i}$, $i=0,1,\ldots$, with $\delta_0<1$ satisfying the requirement of the $(\de_0\epsilon)$-minimality of $\bx$ and such that $\sum_{i=0}^{\infty}\delta_{i}=1$.

5) The power index $p$ in (iii) and (iv) is an arbitrary positive number and can be less than 1.
\medskip

The next statement corresponds to $N=1$ and $\rho$ being a distance function.
It generalizes the Ekeland variational principle.

\begin{cor}\label{T3}
Let $(X,d)$ be a complete metric space, Y a normed vector space, $C$ a pointed closed convex cone in $Y$, $\bar c\in\Int C$ and let $f : X \rightarrow Y$ be $C$-\lsc\ with respect to $\bar c$.
Suppose $\la>0$ and $\epsilon>0$.
If $x_0\in X$ is an $\epsilon$-mi\-nimal point of $f$,
then there exists a point $\bar x\in X$ such that
\begin{enumerate}
\item
$\ds d(\bx,x_{0})\le \frac{\la}{d(\bar c,X\setminus C)}$;
\item
$\ds f(x_{0})-f(\bx)
-\frac{\epsilon}{\la}d(\bx,x_{0})\bar c\in C;$
\item
$\ds f(\bx)- f(x)-\frac{\epsilon}{\la}d(x,\bx)\bar c\notin C$
for all $x \in X\setminus\{\bar x\}$.
\end{enumerate}
\end{cor}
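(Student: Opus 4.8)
The plan is to obtain Corollary~\ref{T3} as the instance of Theorem~\ref{main} in which the gauge-type function is the metric itself and the index $N$ equals $1$. First I would take $\rho(x_1,x_2):=d(x_1,x_2)$; this is a gauge-type function in the sense of Definition~\ref{D2} (continuity and $\rho(x,x)=0$ are clear, and in condition~(ii) one may take $\delta:=\epsilon$), and, being a metric, it possesses the triangle inequality. Then I would fix any sequence $\{\epsilon_i\}_{i=1}^\infty$ of positive numbers with $\epsilon_i\downarrow0$ (e.g.\ $\epsilon_i:=1/i$), and set $N:=1$, $\delta_0:=\epsilon/\la>0$, and $\delta_i:=0$ for all $i\in\N$, so that $\delta_i>0$ for $i<N$ and $\delta_i=0$ for $i\ge N$, as Theorem~\ref{main} requires.

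Since $x_0$ is assumed to be an $\epsilon$-minimal point of $f$, item~3 of Remark~\ref{R18} (or directly the observation following Definition~\ref{D11}) shows that the $\epsilon$-minimality of $x_0$ with respect to $\delta_0\rho$ and $\bar c$ demanded by Theorem~\ref{main} holds automatically. Applying Theorem~\ref{main} then yields a point $\bar x\in X$ (together with an auxiliary sequence $\{x_i\}_{i=1}^\infty\to\bar x$, which plays no role in the statement of the corollary). It remains to rewrite the conclusions of Theorem~\ref{main} under the present choice of data. Conclusion~(i) of Theorem~\ref{main}, with $\rho=d$, reads $d(\bar x,x_0)\le\epsilon/(\delta_0 d(\bar c,X\setminus C))=\la/d(\bar c,X\setminus C)$, which is assertion~(i). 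For assertion~(ii) I would use condition~\eqref{C4-11} of Corollary~\ref{C4}: when $N=1$ the sum $\sum_{i=0}^{N-1}\delta_i\rho(\bar x,x_i)$ collapses to $\delta_0 d(\bar x,x_0)=(\epsilon/\la)d(\bar x,x_0)$, so \eqref{C4-11} becomes $f(x_0)-f(\bar x)-\tfrac{\epsilon}{\la}d(\bar x,x_0)\bar c\in C$. For assertion~(iii), since $N=1<\infty$ and $\rho=d$ has the triangle inequality, Corollary~\ref{C6-0} applies: in condition~\eqref{C6-4} the sums $\sum_{i=0}^{N-2}(\cdots)$ are empty (recall the convention $\sum_{k=0}^{-1}a_k=0$) and $\delta_{N-1}=\delta_0=\epsilon/\la$, so \eqref{C6-4} reduces to $f(\bar x)-f(x)-\tfrac{\epsilon}{\la}d(x,\bar x)\bar c\notin C$ for every $x\in X\setminus\{\bar x\}$, which is assertion~(iii).

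I do not foresee a real obstacle: the argument is a direct specialization, and the only care needed is the routine bookkeeping of the $N=1$ case --- verifying that $d$ meets Definition~\ref{D2}, handling the empty-sum convention, and checking that the rescaling $\delta_0=\epsilon/\la$ turns the generic constant $\epsilon/(\delta_0 d(\bar c,X\setminus C))$ and the coefficient $\delta_0$ appearing in Theorem~\ref{main} and in \eqref{C4-11}, \eqref{C6-4} into $\la/d(\bar c,X\setminus C)$ and $\epsilon/\la$ respectively. As an alternative to this rescaling one could instead keep $\delta_0=1$ and read off the conclusions from the $(\epsilon/\la)$-normalized versions of the statements recorded in item~5 of Remark~\ref{R18}; both routes lead to the stated corollary.
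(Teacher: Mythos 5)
Your proposal is correct and follows essentially the same route as the paper: specialize Theorem~\ref{main} with $\rho:=d$, $N=1$, $\delta_0:=\epsilon/\la$, $\delta_i:=0$ for $i\ge1$, and then read off (i) from Theorem~\ref{main}(i), (ii) from \eqref{C4-11}, and (iii) from \eqref{C6-4} via Corollary~\ref{C6-0} using the triangle inequality of the metric. The extra bookkeeping you supply (empty sums, the rescaling $\epsilon/\delta_0=\la$, and the reduction of $\epsilon$-minimality to $\epsilon$-minimality with respect to $\delta_0\rho$ and $\bar c$) matches what the paper leaves implicit.
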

\begin{proof}
Set $\rho:=d$, $N=1$,
$\de_0:=\epsilon/\la$,
$\epsilon_i:=\epsilon/2^{i}$ and $\de_i:=0$ $(i=1,2,\ldots)$.
Then $\epsilon_i\downarrow0$ as $i\to\infty$ and $\epsilon/\de_0=\la$.
The conclusion follows from Theorem~\ref{main} and Corollary~\ref{C6}.
\end{proof}

\begin{rem}
1. Instead of $C$-lower semicontinuity of function $f$, it is sufficient to assume in Corollary~\ref{T3} that, for any $x\in X$, the set
\begin{align*}
\left\{u\in X\mid f(x)- f(u)-d(u,x)\bar c\in C\right\}
\end{align*}
is closed (cf. Remark~\ref{R18}.2).

2. One can try to use the estimates in Theorem~\ref{main} and its corollaries for developing a ``smooth'' theory of vector error bounds similar to the conventional theory based on the application of vector versions of the Ekeland variational principle (cf. \cite{BedKru12,BedKru12.2}).
\end{rem}
\section*{Acknowledgments}
The research was supported by the Australian Research Council, project DP110102011; Naresuan University, and Thailand Research Fund, the Royal Golden Jubilee Ph.D. Program.
\bibliographystyle{gOPT}
\bibliography{buch-kr,kruger,kr-tmp}
\end{document}